\documentclass{amsart}

\usepackage{amsmath,amsfonts,amsthm,amssymb,verbatim,enumerate,graphicx,setspace}
\usepackage{color}
\usepackage[usenames]{xcolor}
\usepackage[flushmargin]{footmisc}
\newcommand{\comments}[1]{}
\newtheorem{theorem}{Theorem}[section]

\newtheorem{lemma}{Lemma}[section]
\newtheorem{corollary}{Corollary}[section]

\newtheorem{remarkk}{Remark}
\numberwithin{equation}{section}
\def \isnatural {\in\mathbb{N}}

\def \iscomplex {\in\mathbb{C}}

\def \spw {spider's web}
\def \spws {spiders' webs}

\def \varname {\nu}
\newcommand{\tef}{transcendental entire function}
\newcommand\qfor{\quad\text{for }}

\makeatletter
\def\blfootnote{\xdef\@thefnmark{}\@footnotetext}
\makeatother

\begin{document}
\title[Julia and escaping set spiders' webs of positive area]{Julia and escaping set spiders' webs of positive area}
\author{D. J. Sixsmith}
\address{Department of Mathematics and Statistics \\
	 The Open University \\
   Walton Hall\\
   Milton Keynes MK7 6AA\\
   UK}
\email{david.sixsmith@open.ac.uk}
%
%
\begin{abstract}
We study the dynamics of a collection of families of {\tef}s which generalises the well-known exponential and cosine families. We show that for functions in many of these families the Julia set, the escaping set and the fast escaping set are all {\spws} of positive area. This result is unusual in that most of these functions lie outside the Eremenko-Lyubich class $\mathcal{B}$. This is also the first result on the area of a {\spw}.
\end{abstract}
\maketitle
%
%
%
\blfootnote{2010 \itshape Mathematics Subject Classification. \normalfont Primary 37F10; Secondary 30D05.}
\blfootnote{The author was supported by Engineering and Physical Sciences Research Council grant EP/J022160/1.}
\section{Introduction}
Suppose that $f:\mathbb{C}\rightarrow\mathbb{C}$ is a {\tef}. The \itshape Fatou set \normalfont $F(f)$ is defined as the set of points $z\iscomplex$ such that $(f^n)_{n\isnatural}$ is a normal family in a neighbourhood of $z$. Since $F(f)$ is open, it consists of at most countably many connected components, called \itshape Fatou components\normalfont. The \itshape Julia set \normalfont $J(f)$ is the complement in $\mathbb{C}$ of $F(f)$. An introduction to the properties of these sets was given in \cite{MR1216719}.

For a general {\tef} the \itshape escaping set \normalfont $$I(f) = \{z : f^n(z)\rightarrow\infty\text{ as }n\rightarrow\infty\}$$ was first studied in \cite{MR1102727}, where it was shown, for example, that $J(f) = \partial I(f)$. The set $I(f)$ now plays a key role in the study of complex dynamics. It was asked in \cite{MR1102727} whether $I(f)$ has only unbounded components, and this remains a major open problem known as \itshape Eremenko's conjecture\normalfont. 

The \itshape fast escaping set \normalfont $A(f)$ is a subset of $I(f)$. It was introduced in \cite{MR1684251}, and was defined in \cite{Rippon01102012} by
\begin{equation}
\label{Adef}
A(f) = \{z : \text{there exists } \ell \isnatural \text{ such that } |f^{n+\ell}(z)| \geq M^n(R,f), \text{ for } n \isnatural\}.
\end{equation}
Here, the \itshape maximum modulus function \normalfont $M(r,f) = \max_{|z|=r} |f(z)|,$ for $r \geq 0,$ $M^n(r,f)$ denotes repeated iteration of $M(r,f)$ with respect to the variable $r$, and $R > 0$ is such that $M(r,f) > r$, for $r \geq R$. The set $A(f)$ also now plays a key role in the study of complex dynamics, in particular in relation to partial progress on Eremenko's conjecture. We refer to \cite{Rippon01102012} for a detailed account of the properties of $A(f)$.

Following \cite{Rippon01102012}, we also define the related sets
\begin{equation}
\label{defAR}
A_R(f) = \{z : |f^n(z)| \geq M^n(R,f), \text{ for } n\isnatural\},
\end{equation}
where $R > 0$ is such that $M(r,f) > r$, for $r \geq R$.

We study the families of {\tef}s defined, for $n\isnatural$, by $$\mathcal{E}_n = \{ f : f(z) = \sum_{k=0}^{n-1} a_k \exp(\omega_n^k z), \text{ where } a_k \ne 0 \text{ for } k\in\{0,1,\cdots,n-1 \}\},$$ where $\omega_n = \exp(2\pi i/n)$ is an $n$th root of unity. We set $$\mathcal{E} = \bigcup_{n=1}^\infty \mathcal{E}_n.$$

The families $\mathcal{E}_1$ and $\mathcal{E}_2$ are both well-known. $\mathcal{E}_1$ is the exponential family $$\{ f : f(z) = \lambda \exp(z), \ \lambda \ne 0\},$$ and $\mathcal{E}_2$ is the cosine family $$\{ f : f(z) = \gamma\exp(z) + \delta\exp(-z), \ \gamma\ne 0, \ \delta\ne 0\}.$$

%
%
The first part of this paper concerns the size of $J(f) \cap A(f)$, for $f \in \mathcal{E}$. The size of the Julia sets of functions in $\mathcal{E}_1$ and $\mathcal{E}_2$ was considered by McMullen \cite{MR871679}. In particular he showed that if $f\in \mathcal{E}_1$, then $J(f)$ has Hausdorff dimension equal to $2$; see, for example, \cite{falconer} for a definition of Hausdorff dimension. McMullen also showed that though there are many functions $f \in \mathcal{E}_1$ such that $J(f)$ has area equal to zero, if $f\in \mathcal{E}_2$, then $J(f)$ has positive area. In fact, it can be seen from an analysis of the construction in McMullen's paper that if $f\in \mathcal{E}_2$, then $J(f) \cap A(f)$ has positive area. Our first main result is a generalisation of this fact to the case $n\geq 2$.
\begin{theorem}
\label{T2}
Suppose that $f \in \mathcal{E}_n$, for $n\geq 2$. Then $J(f) \cap A(f)$ has positive area.
\end{theorem}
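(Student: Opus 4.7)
The plan is to adapt McMullen's density argument \cite{MR871679} for the cosine family to the broader structure of $\mathcal{E}_n$. The function $f \in \mathcal{E}_n$ is a sum of $n$ exponentials whose exponents point in the $n$ directions $\omega_n^k = e^{2\pi i k/n}$. This yields $n$ natural ``tracts'' in which $f$ is well approximated by a single exponential term: in the sector
$$T_k = \{z : |\arg z + 2\pi k/n| < \pi/(2n),\ |z| > R\}$$
one has $\mathrm{Re}(\omega_n^k z) \geq |z|\cos(\pi/(2n))$ while $\mathrm{Re}(\omega_n^j z)$ is bounded above by a strictly smaller multiple of $|z|$ for $j \neq k$, so for $R$ large
$$f(z) = a_k \exp(\omega_n^k z)\bigl(1 + \varepsilon_k(z)\bigr), \qquad |\varepsilon_k(z)| = O(e^{-c|z|}),$$
with a matching estimate for $f'$. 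Thus $f$ restricted to $T_k$ is a small perturbation of an exponential map.

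Fix a tract, say $T_0$, and foliate its distant part by horizontal strips of height $2\pi$ that serve as approximate fundamental domains for $z \mapsto a_0 e^z$. On each such strip $B$ with $\mathrm{Re}(B)$ at least some threshold $\rho$, a direct calculation for $e^z$ combined with the distortion estimate above shows that the subset of $B$ on which $|f(z)| \geq M(\rho,f)$ and $f(z)$ lies again in some tract $T_j$ occupies a proportion of $B$ bounded below by a constant $c > 0$ depending only on $f$. Pulling back iteratively and invoking Koebe-type distortion for the univalent branches of $f$ on each strip, I would construct a nested sequence of measurable sets $E_0 \supset E_1 \supset E_2 \supset \cdots$ inside $B$, where $z \in E_k$ encodes the condition that $f^j(z)$ lies in a tract and satisfies $|f^j(z)| \geq M^j(\rho,f)$ for $1 \leq j \leq k$. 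Iterating the density estimate and absorbing distortion into a summable error term gives $|E_k|/|E_0| \geq \prod_{j=1}^{k}(1 - \eta_j)$ with $\sum \eta_j < \infty$, so $E_\infty := \bigcap_k E_k$ has positive area. By definition of the thresholds, $E_\infty \subset A_R(f) \subset A(f)$, and the fact that $|(f^k)'(z)| \to \infty$ on $E_\infty$ (by the derivative estimate applied along the orbit) shows that the iterates fail to be normal there, so $E_\infty \subset J(f)$.

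The main obstacle is obtaining uniform density and distortion bounds as orbits jump between different tracts, since the dominant term changes from $a_k \exp(\omega_n^k z)$ to $a_j \exp(\omega_n^j z)$ at each step. This forces one to work with a family of approximate fundamental-domain boxes compatibly chosen in each $T_k$ and to check that the inverse branch of $f$ landing in $T_k$ from a box in $T_j$ has bounded distortion uniformly in $(j,k)$. A secondary issue is that for $n \geq 3$ the function $f$ lies outside class $\mathcal{B}$, so the logarithmic-tract machinery usually invoked in similar arguments is unavailable; the argument instead rests on the explicit sum-of-exponentials form of $f$ to bound the $n-1$ non-dominant terms on each tract by $O(e^{-c|z|})$ times the dominant one, and this explicit control is precisely what replaces the class $\mathcal{B}$ hypothesis in the proof.
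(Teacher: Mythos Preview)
Your overall plan---adapt McMullen's nested-box construction to $\mathcal{E}_n$ by exploiting the dominance of a single exponential term in each of $n$ regions, control distortion on inverse branches, and run a density/product argument---is exactly the paper's strategy. Two steps, however, do not go through as written.

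\textbf{The geometry of the good region.} Your sectors $T_k$ have half-angle $\pi/(2n)$, so together they occupy only angular measure $\pi$; the complementary sectors carry a fixed positive proportion of any large annulus. With this decomposition the density of the ``good'' part of $f(B)$ is bounded away from $1$ by a constant independent of the stage, so the $\eta_j$ do not tend to zero and $\prod_j(1-\eta_j)=0$. Widening the sectors does not help: any sectorial bad region has relative measure bounded below. The paper's key geometric observation is that the locus where two terms $a_p\exp(\omega_n^p z)$ and $a_q\exp(\omega_n^q z)$ are comparable is not a sector but a \emph{strip} of fixed width (namely where $\operatorname{Re}((\omega_n^p-\omega_n^q)z)$ is bounded). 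Removing $n$ such strips $Q_k$ and a bounded polygon $P(\nu)$ leaves a region $R(\nu)$ whose complement meets the image of a box---a set of diameter of order $e^{\nu_{n-1}}$---in a set of area only $O(e^{\nu_{n-1}})$, hence relative area $O(e^{-\nu_{n-1}})$. That is what makes $\sum_j\eta_j<\infty$. Your sentence ``bounded below by a constant $c>0$'' and the later claim $\sum\eta_j<\infty$ are in fact incompatible unless you replace the sectors by these strips.

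\textbf{Membership in $J(f)$.} The implication ``$|(f^k)'(z)|\to\infty$, hence the family $(f^k)$ is not normal at $z$'' is false in general: on an escaping Fatou component one may have $f^k\to\infty$ locally uniformly and $|(f^k)'|\to\infty$ simultaneously (normality of $1/f^k$ only forces $(f^k)'/(f^k)^2\to 0$). The paper instead establishes $\left|z\,f'(z)/f(z)\right|>2$ throughout the good region; by the chain rule this yields $\left|(f^n)'(z_0)/f^n(z_0)\right|\geq 2^n/|z_0|\to\infty$, and a separate lemma (a Koebe-$\tfrac14$ argument for iterates) shows that this ratio is \emph{bounded} on compact subsets of any simply connected escaping Fatou component, giving the desired contradiction. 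One still has to exclude multiply connected Fatou components; for $n\geq 3$ this is not automatic (the function lies outside $\mathcal{B}$), and the paper obtains it from a result of Bergweiler--Karpi\'nska via the growth regularity of $\log M(r,f)$. Both of these ingredients are missing from your sketch.
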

Most papers on the size of $J(f)$ concern functions in the class $\mathcal{B}$, or, more generally, functions with a logarithmic tract; see below for definitions of these concepts. It is straightforward to show that $\mathcal{E}_1 \cup \mathcal{E}_2  \subset \mathcal{B}$. We show in Lemma~\ref{LnotinB} below that functions in $\mathcal{E}_n$, for $n\geq 3$, do not have a logarithmic tract. It follows that $\mathcal{E} \cap \mathcal{B} = \mathcal{E}_1 \cup \mathcal{E}_2$, and that Theorem~\ref{T2} is relatively unusual in that it applies to functions without a logarithmic tract. 

The \itshape Eremenko-Lyubich class\normalfont, $\mathcal{B}$, is the class of {\tef}s for which the set of singular values is bounded. There are many results on the size of $J(f)$, and its subsets, for functions in class $\mathcal{B}$. For example, it was shown in \cite{MR1357062} that if $f\in\mathcal{B}$, then $J(f)$ has Hausdorff dimension strictly greater than one. 

As noted in \cite{MR2439666}, the class $\mathcal{B}$ can be generalised to the class of functions with a \itshape logarithmic tract\normalfont; see Section~\ref{Stracts} for the definition of a logarithmic tract. For example, the result of \cite{MR1357062} was generalised to functions with a logarithmic tract in \cite{MR2480096}. McMullen's result on functions in $\mathcal{E}_2$ was strengthened to some functions with a logarithmic tract in \cite{MR2885574}. Although there are some interesting similarities between our results and those of \cite{MR2885574}, it does not seem possible to apply the approach of \cite{MR2885574} to functions without a logarithmic tract. 

An important paper which concerns the size of $J(f) \cap I(f)$ for functions $f$ which need not have a logarithmic tract is that of Bergweiler and Karpi{\'n}ska \cite{MR2609307}. We discuss the relationship between Theorem~\ref{T2} and the results in \cite{MR2609307} in Section~\ref{BK}. \\

%
%
The second part of this paper concerns the structure of $J(f) \cap A(f)$, for $f\in\mathcal{E}$. Devaney and Krych \cite{MR758892} studied the Julia set of many functions in $\mathcal{E}_1$; note \cite{MR1196102} that for functions in class $\mathcal{B}$, $I(f)$, and hence $A(f)$, is a subset of $J(f)$. They showed that the Julia set of one of these functions is a closed set consisting of an uncountable union of disjoint unbounded curves. Devaney and Tangerman \cite{MR873428} first used the name \itshape Cantor bouquet \normalfont for this structure,  and showed that there is a large class of functions, including many exponentials such as $f(z)~=~\frac{1}{4} e^z$, for which the Julia set is a Cantor bouquet. For a general study of Cantor bouquets, including a precise definition, we refer to \cite{MR2902745}.

Schleicher and Zimmer \cite{MR1956142} studied the whole of $\mathcal{E}_1$, and showed that every point in the escaping set of any function in this family lies on an unbounded curve in the escaping set. Rottenfusser and Schleicher \cite{MR2458810} showed that the same is true for functions in $\mathcal{E}_2$. Rempe, Rippon and Stallard \cite{MR2675603} showed that these facts also apply to $J(f) \cap A(f)$. 

We show that if $f \in \mathcal{E}_n$, for $n\geq 3$, then $J(f)~\cap~A(f)$ has a structure known as a {\spw}. A set $E$ is defined in \cite{Rippon01102012} as a \itshape {\spw} \normalfont if it is connected and there exists a sequence of bounded simply connected domains $(G_n)_{n\isnatural}$ such that $$\partial G_n \subset E, \ G_n \subset G_{n+1}, \text{ for } n\isnatural, \text{ and } \bigcup_{n\isnatural}~G_n~=~\mathbb{C}.$$ 

Suppose that $f$ is a {\tef}, and that $R > 0$ is such that $M(r,f) > r$, for $r \geq R$. It was shown in \cite{Rippon01102012} that if $A_R(f)$ is a {\spw}, then so are $A(f)$ and $I(f)$. In \cite[Section 8]{Rippon01102012} many examples of {\tef}s $f$ such that $A_R(f)$ is a {\spw} were given. Further examples were given in \cite{MR2917092}, and also in \cite{MR2838342} which gave the relatively simple example $$g(z) = \cos z + \cosh z,$$ for which $A_R(g)$ is a {\spw}. We observe that $g \in \mathcal{E}_4$. Our second main theorem is a generalisation of this observation.
\begin{theorem}
\label{T1}
Suppose that $f \in \mathcal{E}_n$, for $n\geq 3$, and that $R > 0$ is such that $M(r,f) > r$, for $r \geq R$. Then each of $$A_R(f), \ A(f), \ I(f), \ J(f) \cap A_R(f), \ J(f) \cap A(f), \ J(f) \cap I(f), \text{ and } J(f)$$ is a {\spw}. 
\end{theorem}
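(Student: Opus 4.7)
The plan is to reduce all seven assertions to the single statement that $A_R(f)$ is a {\spw}. Given this, the other six sets are {\spws} by results in \cite{Rippon01102012}, which contains the implication that $A_R(f)$ being a {\spw} forces each of $A(f)$ and $I(f)$ to be a {\spw}, together with the analogous statements for the four intersections involving $J(f)$ and for $J(f)$ itself. The only substantive work is therefore to show $A_R(f)$ is a {\spw}.

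To this end, my strategy is to construct a sequence of circles $C_m = \{z : |z| = r_m\}$, with $r_m \to \infty$, each lying in $A_R(f)$; the sufficiency of this is a standard criterion in \cite{Rippon01102012}. For a circle $C_m$ to lie in $A_R(f)$, it is enough for the minimum modulus $m(r_m, f) := \min_{|z| = r_m} |f(z)|$ to grow much faster than any polynomial in $r_m$; an estimate of the form $m(r_m, f) \geq \exp(c \, r_m)$ for some $c > 0$ will comfortably suffice.

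The key lemma I aim to prove is precisely such an estimate. For $z = r e^{i\theta}$, the $k$-th term of $f$ has modulus $|a_k|\exp(r \cos(\theta + 2\pi k/n))$, and for every $\theta$ one has $\max_{0 \leq k \leq n-1} \cos(\theta + 2\pi k/n) \geq \cos(\pi/n)$, which is strictly positive when $n \geq 3$; this bound is attained at the finitely many transition angles at which two indices $k, k'$ realise the maximum simultaneously. Away from these transition angles, a single term dominates and the remaining terms are smaller by a factor of at most $\exp(-r(\cos(\pi/n) - \cos(3\pi/n)))$, immediately yielding a pointwise estimate $|f(z)| \geq c_1 \exp(r \cos(\pi/n))$ for large $r$.

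The main obstacle is controlling the sum near the transition angles. At such a transition between indices $k$ and $k'$, the sum of the two dominant terms takes the form $\exp(r \cos(\pi/n))\bigl(a_k e^{ir\sigma} + a_{k'} e^{-ir\sigma}\bigr)$ for some nonzero $\sigma$, which vanishes only when $r$ lies in a discrete arithmetic progression determined by $a_k$, $a_{k'}$, and $\sigma$. Since there are finitely many transitioning pairs, hence finitely many bad arithmetic progressions of positive spacing, I can choose $r_m \to \infty$ avoiding small neighbourhoods of all cancellation radii; combining this with the exponentially smaller subdominant terms then yields $m(r_m, f) \geq \exp(c \, r_m)$ for some $c > 0$. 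Feeding this into the {\spw} criterion from \cite{Rippon01102012} completes the proof.
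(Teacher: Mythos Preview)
Your overall reduction---show $A_R(f)$ is a spider's web, then deduce the rest from \cite{Rippon01102012}---matches the paper. Note, though, that the passage to $J(f)\cap A_R(f)$, $J(f)\cap A(f)$, etc.\ via \cite[Theorem~1.5(a)]{Rippon01102012} requires $f$ to have no multiply connected Fatou components; the paper establishes this separately (Corollary~\ref{Cnomconn}), and you should invoke it.

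There is a genuine gap in your argument for $A_R(f)$. You claim that a circle $C_m=\{|z|=r_m\}$ lies in $A_R(f)$ as soon as $m(r_m,f)\ge e^{cr_m}$. But membership in $A_R(f)$ demands $|f^n(z)|\ge M^n(R,f)$ for \emph{every} $n$; a minimum-modulus bound controls only the first iterate. A point on $C_m$ may well map to a modulus at which two dominant exponentials interfere destructively, and your estimate says nothing about $|f^2(z)|$ there. So the implication you assert simply does not hold.

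What does work is the criterion the paper actually uses, Theorem~\ref{webtheorem}: exhibit bounded simply connected domains $G_n\supset B(0,M^n(R,f))$ with $G_{n+1}$ in a bounded component of $\mathbb{C}\setminus f(\partial G_n)$. Your circles can play the role of $\partial G_n$ with $G_n=B(0,r_n)$, and the relevant condition becomes $m(r_n,f)>r_{n+1}$. Your cancellation analysis is then essentially sound and yields $m(r_m,f)\gtrsim e^{r_m\cos(\pi/n)}$ on the good radii. One further wrinkle you do not address: since $\cos(\pi/n)<1$ while $M(r,f)\asymp e^r$, iterating your bound gives growth like $r\mapsto e^{cr}$ with $c<1$, and you must choose the $r_n$ a fixed factor larger than $M^n(R,f)$ for the induction to close. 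This is routine but needs to be said.

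By contrast, the paper sidesteps both issues by replacing circles with truncated regular $n$-gons $P'(\nu)$: each vertex of $P(\nu)$ is cut off along a carefully chosen line on which the two locally dominant exponentials have \emph{equal argument} and hence reinforce. This yields $|f(z)|\ge \epsilon' e^{\nu}$ on $\partial P'(\nu)$---exponent $1$ rather than $\cos(\pi/n)$---so Theorem~\ref{webtheorem} applies directly with $G_n=P'(\beta^n(\nu))$ and $\beta(r)=\tfrac{\epsilon'}{2}e^r$, with no extra bookkeeping. Where you dodge the cancellation by selecting radii, the paper dodges it by reshaping the boundary curve; the latter gives the sharper estimate and the cleaner application.
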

We note that Theorem~\ref{T1} cannot be extended to $n \in \{1, 2\}$ since if $f\in\mathcal{B}$, then $A_R(f)$ is not a {\spw}; see \cite[Theorem 1.8]{Rippon01102012} reproduced in Lemma~\ref{RS1} below.
 
It follows from Bergweiler and Karpi{\'n}ska's result \cite[Theorem 1.1]{MR2609307}, applied to examples in \cite{Rippon01102012} and \cite{MR2838342}, that there are {\tef}s, $f$, for which $I(f)$ and $J(f)$ are {\spws} with Hausdorff dimension equal to $2$. The following is an immediate corollary of Theorem~\ref{T2} and Theorem~\ref{T1}, and shows that there are {\tef}s for which these sets are {\spws} of positive area.
\begin{corollary}
\label{T3}
Suppose that $f \in \mathcal{E}_n$, for $n\geq 3$. Then $A(f)$, $I(f)$ and $J(f)$ are {\spws} of positive area. 
\end{corollary}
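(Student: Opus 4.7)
The proof is essentially immediate once Theorems~\ref{T2} and~\ref{T1} are in hand; only elementary set-theoretic containments are needed to glue them together. My plan is therefore to set up the two theorems with matching hypotheses, apply each one, and then use monotonicity of planar Lebesgue measure.

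First I would note that since $n \geq 3$, Theorem~\ref{T1} applies directly after choosing any $R > 0$ with $M(r,f) > r$ for $r \geq R$ (such an $R$ exists for every \tef{}, since $M(r,f)/r \to \infty$ as $r \to \infty$). This yields at once that each of $A(f)$, $I(f)$ and $J(f)$ is a \spw{}, which is half of what the corollary asserts. To upgrade these sets to positive area, I would then invoke Theorem~\ref{T2}, whose hypothesis is also satisfied since $n \geq 3 \geq 2$, obtaining that $J(f) \cap A(f)$ has positive area. The final step is to observe that $J(f) \cap A(f)$ is contained in each of $J(f)$, $A(f)$, and — via the containment $A(f) \subseteq I(f)$ recalled in the introduction — also in $I(f)$; monotonicity of planar Lebesgue measure then propagates positive area to each of the three target sets.

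No genuine obstacle is expected, as the heavy lifting has already been done in proving Theorems~\ref{T2} and~\ref{T1}. The only care required is bookkeeping: verifying that the range $n \geq 3$ in the corollary is compatible with the slightly weaker hypothesis $n \geq 2$ in Theorem~\ref{T2}, and recording the containment chain $J(f) \cap A(f) \subseteq A(f) \subseteq I(f)$ so that area transfers to $I(f)$ as well as to the two trivial supersets $A(f)$ and $J(f)$.
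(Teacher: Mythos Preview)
Your proposal is correct and matches the paper's approach: the paper simply declares the corollary to be ``an immediate corollary of Theorem~\ref{T2} and Theorem~\ref{T1}'' without writing out a separate proof. Your bookkeeping of the containments $J(f)\cap A(f)\subset A(f)\subset I(f)$ and $J(f)\cap A(f)\subset J(f)$, together with monotonicity of area, is exactly the intended argument.
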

\begin{remarkk}
\normalfont
Theorem~\ref{T2} and Theorem~\ref{T1} can be applied to a more general class of {\tef}s. In particular the results of these theorems hold if $f$ is a {\tef} such that, for some $n\geq 3$, $$f(z) = \sum_{k=0}^{n-1} a_k \exp(b_k z), \text{ where } a_kb_k \ne 0 \text{ for } k \in \{0, 1, \cdots, n-1\},$$ and that $$\operatorname{arg}(b_k) < \operatorname{arg}(b_{k+1}) < \operatorname{arg}(b_k) + \pi, \ \text{ for } k\in\{0, 1, \cdots, n-2\},$$ and finally that $$\operatorname{arg}(b_0) < \operatorname{arg}(b_{n-1}) - \pi.$$ Here, we choose the value of the argument to lie in $[0, 2\pi)$. However, the proofs of these facts are slightly more complicated, and the symmetries of the {\tef}s in $\mathcal{E}$, together with the natural generalisation of the exponential and cosine families, seem to make $\mathcal{E}$ the most interesting sub-class of this more general class.
\end{remarkk}
The structure of this paper is as follows. First, in Section~\ref{BK}, we briefly discuss the paper of Bergweiler and Karpi{\'n}ska mentioned earlier. In Section~\ref{S1} we prove a sufficient condition for a point to be in the Julia set. This may be of independent interest. In Section~\ref{S1a} we show that if $f$ is a function in $\mathcal{E}_n$, for $n \geq 3$, then there are large areas of the plane in each of which $f$ behaves like a single exponential. This enables us to deduce that there is a large area of the plane in which $f$ is conformal in any square of a certain fixed side. Section~\ref{S3} concerns distortion and nonlinearity, and we use a result and construction of McMullen \cite[Proposition~3.1]{MR871679} to show that the distortion of the iterates of $f$ is bounded above in these squares. These two facts about the behaviour of $f$ in these squares enable us to estimate the dimension of the Julia set even though $f$ lies outside $\mathcal{B}$. In Section~\ref{S3a} we prove Theorem~\ref{T2}, and in Section~\ref{S2} we prove Theorem~\ref{T1}. Finally, in Section~\ref{Stracts} we give some definitions and prove a result regarding logarithmic tracts.
%
%
%
%
%
\section{Results of Bergweiler and Karpi{\'n}ska}
\label{BK}
Bergweiler and Karpi{\'n}ska \cite[Theorem 1.1]{MR2609307} showed that there exists a large class of functions, many of which are outside of the class $\mathcal{B}$, for which $J(f) \cap I(f)$ has Hausdorff dimension equal to $2$. Their proof of the following result used several ideas in a novel manner, including the Ahlfors islands theorem, a result of McMullen, and the construction of a large set of points in which the size of the logarithmic derivative is tightly constrained.
\begin{theorem}
\label{BKtheo}
Suppose that $f$ is a {\tef}, and that there exist $A, B, C, r_0 > 1$ such that
\begin{equation}
\label{BKeq}
A \log M(r, f) \leq \log M(Cr, f) \leq B \log M(r, f), \qfor r\geq r_0.
\end{equation}
Then $J(f) \cap I(f)$ has Hausdorff dimension equal to $2$. 
\end{theorem}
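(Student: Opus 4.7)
The plan is to construct, for every $\epsilon>0$, a subset of $J(f)\cap I(f)$ of Hausdorff dimension at least $2-\epsilon$, via a McMullen-style Cantor construction built from univalent preimages of a fixed round disk. First I would extract two analytic consequences from the hypothesis \eqref{BKeq}. Iterating the right-hand inequality bounds $\log M(r,f)$ polynomially in $\log r$, and via Jensen's formula this bounds the number of zeros of $f$ in $\{|z|\leq r\}$ by a constant multiple of $\log M(r,f)$; feeding this into the Poisson--Jensen representation for $f'/f$ yields, off an exceptional set of $z$ of small relative area, an estimate of the form
\begin{equation*}
\left|\frac{zf'(z)}{f(z)}\right|\leq K\log M(|z|,f).
\end{equation*}
The left-hand inequality is then used in the reverse direction: iteration shows $\log M(r,f)$ grows faster than any power of $\log r$, so $f$ has infinite order, and Ahlfors' five islands theorem supplies, in each sufficiently large annulus $\{r\leq|z|\leq Cr\}$, many components that are mapped conformally by $f$ onto each of a chosen collection of disjoint Jordan domains in a fixed large target disk $D$.

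Next I would set up the Cantor construction. Choose $D$ so that $|w|\geq\max(r_0,R)$ for $w\in D$, and build a nested sequence $E_0\supset E_1\supset E_2\supset\cdots$, where $E_0=\overline D$ and each component of $E_{n+1}$ is a conformal preimage, under some iterate $f^{k_n}$, of an island sitting inside a component of $E_n$. Combining the logarithmic-derivative bound above with the Koebe distortion theorem on each univalent island keeps the distortion of every component of $E_{n+1}$ uniformly controlled, so each such component is comparable to a round disk occupying a definite area fraction $\delta_n$ of its parent. Because each point in $\bigcap_n E_n$ is sent into $D$ at arbitrarily deep iterates and thence to ever larger moduli, its orbit escapes to infinity; because the construction avoids Fatou components at every level, the intersection also lies in $J(f)$. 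McMullen's density criterion then gives
\begin{equation*}
\dim_H\!\left(\bigcap_n E_n\right)\geq 2-\limsup_{n\to\infty}\frac{\log(1/\delta_n)}{\log(1/d_n)},
\end{equation*}
where $d_n$ is the diameter scale at stage $n$; optimising $\delta_n$ and $d_n$ over the available islands pushes this lower bound as close to $2$ as one wishes.

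The main obstacle is controlling distortion of the iterates across deep compositions without the help of a logarithmic tract. In class $\mathcal{B}$ the standard logarithmic change of coordinates would give such control for free, whereas here the Poisson--Jensen bound on $f'/f$ holds only off an exceptional set, so one must choose the radii at which Ahlfors islands are extracted to lie on "good" scales dictated by \eqref{BKeq}, and splice the univalent inverses at successive levels together so that the distortion accumulated along a branch of $f^{k_1+\cdots+k_n}$ remains uniformly bounded. Once this careful scale selection is in place, the density criterion delivers the dimension estimate, and letting $\epsilon\to 0$ concludes the proof.
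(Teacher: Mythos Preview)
The paper does not prove this theorem at all. Theorem~\ref{BKtheo} is quoted verbatim from Bergweiler and Karpi{\'n}ska \cite[Theorem~1.1]{MR2609307} and is used in the paper only as background (to motivate the study of $\mathcal{E}$ and to record that their hypothesis holds for $f\in\mathcal{E}$ via Lemma~\ref{fitsBK}). There is therefore no ``paper's own proof'' to compare against; the only information the paper gives is the one-line summary that the proof in \cite{MR2609307} uses the Ahlfors islands theorem, a result of McMullen, and control of the logarithmic derivative on a large set.

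Judged against that summary, your outline has the right architecture: Ahlfors islands to generate many univalent preimages at each scale, a bound on $|zf'(z)/f(z)|$ off a thin exceptional set to control distortion along branches, and McMullen's density lemma to convert area-fraction data into a Hausdorff-dimension lower bound. Two points in your write-up are inaccurate, however. First, iterating the right-hand inequality in \eqref{BKeq} gives $\log M(r,f)\leq c\,r^{\log B/\log C}$, i.e.\ polynomial growth in $r$, not in $\log r$; this is exactly what is needed to bound the counting function of zeros and hence $f'/f$. Second, the left-hand inequality yields $\log M(r,f)\geq c\,r^{\log A/\log C}$, so $f$ has \emph{positive lower order}, not infinite order (indeed the two inequalities together force finite positive order); it is this positive lower order, not infinite order, that drives the island-production argument. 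These are genuine slips in the analytic set-up, though the overall strategy you describe is the one Bergweiler and Karpi{\'n}ska use.
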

We first show that if $f \in \mathcal{E}$, then $f$ satisfies the hypotheses of Theorem~\ref{BKtheo}, and hence $J(f) \cap I(f)$ has Hausdorff dimension equal to $2$.
\begin{lemma}
\label{fitsBK}
Suppose that $f\in \mathcal{E}$. Then there exist $A, B, C, r_0 > 1$ such that (\ref{BKeq}) holds.
\end{lemma}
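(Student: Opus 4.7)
The plan is to establish the sharper asymptotic $\log M(r,f) = r + O(1)$ as $r \to \infty$ for every $f \in \mathcal{E}$; the inequality \eqref{BKeq} is then an immediate consequence.

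For the upper bound, I would simply apply the triangle inequality: since $|\exp(\omega_n^k z)| = \exp(\operatorname{Re}(\omega_n^k z)) \leq e^{|z|}$, we get
\begin{equation*}
M(r,f) \leq n \cdot \max_{0 \leq k \leq n-1} |a_k| \cdot e^r, \qfor r \geq 0.
\end{equation*}
So $\log M(r,f) \leq r + O(1)$.

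The main step is the matching lower bound, and the key observation is that for each fixed $k \in \{0, 1, \dots, n-1\}$, choosing $\theta_k = -2\pi k/n$ makes $\omega_n^k e^{i\theta_k} = 1$. Thus at the point $z = re^{i\theta_k}$, the $k$th term of $f$ has modulus exactly $|a_k| e^r$, while for every $j \neq k$ we have
\begin{equation*}
|a_j \exp(\omega_n^j re^{i\theta_k})| = |a_j| \exp\!\bigl(r \cos(2\pi(j-k)/n)\bigr) \leq \max_j |a_j| \cdot e^{r \cos(2\pi/n)}.
\end{equation*}
For $n \geq 2$ we have $\cos(2\pi/n) < 1$, so the reverse triangle inequality gives $M(r,f) \geq \tfrac{1}{2}|a_k| e^r$ for all sufficiently large $r$. (The case $n=1$ is trivial, as $M(r,f) = |a_0| e^r$ exactly.) Combining the bounds yields $\log M(r,f) = r + O(1)$.

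Given this asymptotic, I would finish by fixing $C = 2$ and choosing $K$ so that $|\log M(r,f) - r| \leq K$ for all $r \geq r_0$. Then for $r \geq r_0$ large enough the chain
\begin{equation*}
\tfrac{3}{2}(r + K) \;\leq\; 2r - K \;\leq\; \log M(2r, f) \;\leq\; 2r + K \;\leq\; 3(r - K)
\end{equation*}
holds, giving \eqref{BKeq} with $A = 3/2$, $B = 3$, $C = 2$. The only non-routine ingredient is the lower bound, which hinges on the geometric fact that the $n$ exponential frequencies $\omega_n^k$ point in distinct directions separated by at least $2\pi/n$, so that on each ray of argument $-2\pi k/n$ exactly one of the exponentials attains the full growth rate $e^r$ while the remaining ones are exponentially smaller.
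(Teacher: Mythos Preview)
Your proof is correct and follows essentially the same route as the paper: both establish the two-sided estimate $\log M(r,f) = r + O(1)$ and deduce \eqref{BKeq} directly from it. The only difference is in how the lower bound $M(r,f)\geq \tfrac{1}{2}|a_0|e^r$ is justified for $n\geq 3$: the paper appeals forward to Lemma~\ref{ineq.lemma}, whereas you give the elementary one-line observation that on the ray $\arg z = -2\pi k/n$ the $k$th exponential has modulus $|a_k|e^r$ while every other term has modulus at most $\max_j|a_j|\,e^{r\cos(2\pi/n)}$, which is exponentially smaller. Your argument is more self-contained and avoids the forward reference, but the underlying idea---one exponential dominates along each of the $n$ symmetric directions---is the same.
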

\begin{proof}
Recall that $f(z) = \sum_{k=0}^{n-1} a_k \exp(\omega_n^k z)$, for some $n\isnatural$. We note first that $$M(r, f) \leq e^r \sum_{k=0}^{n-1} |a_k|, \qfor r>0.$$ 

We claim that $$M(r,f) \geq |f(r)| \geq \frac{1}{2}|a_0| e^r, \qfor \text{large } r.$$ This is immediate for $n\in \{1, 2\}$, and follows from Lemma~\ref{ineq.lemma}, below, for $n\geq 3$. We deduce that there exist constants $\alpha_1, \alpha_2 \in\mathbb{R}$ such that  
\begin{equation}
\label{myMeq}
r + \alpha_1 \leq \log M(r, f) \leq r + \alpha_2, \qfor\text{large } r.
\end{equation}
The lemma follows easily from (\ref{myMeq}).
\end{proof}
In proving Theorem~\ref{T2}, we show that for functions in $\mathcal{E}_n$, for $n\geq 2$, the conclusion of Theorem~\ref{BKtheo} can be strengthened in two ways. Firstly that $I(f)$ can be replaced by $A(f)$, and secondly that $J(f) \cap A(f)$ has positive area rather than Hausdorff dimension $2$.\\

We also use the following result of Bergweiler and Karpi{\'n}ska \cite[Theorem 4.5]{MR2609307}.
\begin{lemma}
\label{nomconn}
Suppose that $f$ is a {\tef}, and that there exist $A, B, C, r_0 > 1$ such that (\ref{BKeq}) holds. Then $f$ has no multiply connected Fatou components.
\end{lemma}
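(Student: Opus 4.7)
The plan is to argue by contradiction in the standard fashion used to preclude multiply connected Fatou components via growth conditions on the maximum modulus. Suppose, for a contradiction, that $f$ admits a multiply connected Fatou component $U$. By Baker's classical theorem on multiply connected Fatou components of transcendental entire functions, $U$ must be a bounded wandering domain, $f^n \to \infty$ locally uniformly on $U$, and for all sufficiently large $n$ the image $U_n := f^n(U)$ contains a round annulus $A_n = \{z : r_n \leq |z| \leq 2 r_n\}$ which separates $0$ from $\infty$, with $r_n \to \infty$ as $n \to \infty$.

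The key step is then to upgrade this geometric statement to a quantitative growth estimate on $M(r,f)$. For this I would invoke the results of Rippon and Stallard on the behaviour of iterates in multiply connected Fatou components (see \cite{Rippon01102012}): the nested structure $f(U_n) \subset U_{n+1}$, combined with the fact that each $U_n$ contains a loop winding once around the origin, forces the maximum modulus of $f$ to grow extremely rapidly along a suitable sequence of radii. Concretely, one obtains a sequence $s_n \to \infty$ such that
\begin{equation*}
\frac{\log M(2 s_n, f)}{\log M(s_n, f)} \to \infty \qfor n \to \infty.
\end{equation*}
Heuristically, the loops around $0$ in $U_n$ combined with the maximum principle applied to $\log|f|$ on a disk enclosing $U_n$ force much faster growth on the outer boundary of $A_n$ than on the inner boundary, and one iteration then transfers this to growth of $M(r,f)$ itself.

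It remains to observe that such behaviour is incompatible with the hypothesis (\ref{BKeq}). Indeed, the right-hand inequality $\log M(Cr,f) \leq B\log M(r,f)$ iterates: for any fixed $K\geq 1$ one has $\log M(C^K r, f) \leq B^K \log M(r,f)$ for all sufficiently large $r$, so by choosing $K$ with $C^K \geq 2$ we deduce $\log M(2r,f) \leq B^K \log M(r,f)$ for large $r$. Applying this with $r = s_n$ contradicts the displayed limit above, so no multiply connected Fatou component can exist.

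The main obstacle in this plan is the quantitative middle step: one must translate the purely topological statement that each $U_n$ contains a large annulus into an analytic super-linear growth bound on $\log M(r,f)$ along a subsequence. This is exactly the content of the Rippon--Stallard analysis of iteration in multiply connected components, and it is where all the real work lies; the contradiction with (\ref{BKeq}) is then essentially formal.
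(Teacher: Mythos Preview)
The paper does not prove this lemma at all: it is quoted as \cite[Theorem~4.5]{MR2609307} and used as a black box. So there is no argument in the paper to compare your attempt against.

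Your outline follows the natural route --- assume a multiply connected Fatou component exists, invoke the Baker/Zheng structural theory to produce large round annuli in the iterated images, and then extract growth of $\log M$ that violates the right-hand inequality of~(\ref{BKeq}). The final contradiction step is certainly correct: iterating $\log M(Cr,f)\le B\log M(r,f)$ gives a uniform upper bound on $\log M(2r,f)/\log M(r,f)$ for all large~$r$, which is incompatible with your displayed limit.

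The only reservation is the middle step, which you yourself flag. The precise assertion that the existence of a multiply connected Fatou component forces
\[
\frac{\log M(2s_n,f)}{\log M(s_n,f)}\longrightarrow\infty
\]
along some sequence $s_n\to\infty$ is not something one can read off from \cite{Rippon01102012}; that paper is about the fast escaping set, not about the fine annular structure of $f^n(U)$. What one actually needs here is the stronger statement that the annuli in $f^n(U)$ satisfy $\log R_n/\log r_n\to\infty$ (Zheng; later sharpened by Bergweiler--Rippon--Stallard), together with a minimum-modulus argument to transfer this to $M(r,f)$. As written, your proof essentially replaces one external citation (Bergweiler--Karpi\'nska) by an appeal to results whose exact location and form you have not pinned down, so it is a sketch rather than a proof.
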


The following is an immediate consequence of Lemma~\ref{fitsBK} and Lemma~\ref{nomconn}.
\begin{corollary}
\label{Cnomconn}
Suppose that $f\in \mathcal{E}$. Then $f$ has no multiply connected Fatou components.
\end{corollary}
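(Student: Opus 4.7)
The plan is essentially to chain together the two lemmas that immediately precede the corollary, since the statement is advertised as an immediate consequence. First, given $f \in \mathcal{E}$, I would invoke Lemma~\ref{fitsBK} to obtain constants $A, B, C, r_0 > 1$ such that the Bergweiler–Karpi{\'n}ska growth condition (\ref{BKeq}) is satisfied by $f$.

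Second, with (\ref{BKeq}) in hand, I would apply Lemma~\ref{nomconn} directly to conclude that $f$ has no multiply connected Fatou components. That is the whole argument; there is nothing to unpack beyond citing the two results in sequence.

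There is no real obstacle here — the work was done in Lemma~\ref{fitsBK} (where the bound $r + \alpha_1 \leq \log M(r,f) \leq r + \alpha_2$ established via Lemma~\ref{ineq.lemma} readily yields (\ref{BKeq})) and in Lemma~\ref{nomconn} (quoted from \cite[Theorem 4.5]{MR2609307}). The corollary is just the combination of the two, so the proof should be a one-sentence deduction. If anything, the only thing worth flagging is that no extra hypothesis on $f$ is required: membership in $\mathcal{E}$ alone suffices because Lemma~\ref{fitsBK} treats the whole family uniformly.
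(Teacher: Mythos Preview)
Your proposal is correct and matches the paper's approach exactly: the corollary is stated as an immediate consequence of Lemma~\ref{fitsBK} and Lemma~\ref{nomconn}, and your one-line chaining of these two results is precisely what is intended.
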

%
%
%
%
\section{A sufficient condition for a point to be in the Julia set}
\label{S1}
The main result of this section shows that, in general, an escaping point with a certain orbit is either in a multiply connected Fatou component or in the Julia set. This result may well be known, but we are not aware of a reference.
\begin{theorem}
\label{TinJulia}
Suppose that $f$ is a {\tef} and that $z_0 \in~I(f)$. Set $z_n = f^n(z_0)$, for $n\isnatural$. Suppose that there exist $\lambda > 1$ and $N\geq 0$ such that 
\begin{equation}
\label{orbiteq}
f(z_{n}) \ne 0 \quad\text{ and }\quad \left|z_n \frac{f'(z_n)}{f(z_n)}\right| \geq \lambda, \qfor n\geq N.
\end{equation}
Then either $z_0$ is in a multiply connected Fatou component of $f$, or $z_0 \in J(f)$.
\end{theorem}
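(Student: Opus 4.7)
The plan is to argue by contradiction. Suppose $z_0 \in F(f)$ and let $U_0$ be the Fatou component containing $z_0$; if $U_0$ is multiply connected, the first alternative of the conclusion holds, so I assume $U_0$ is simply connected and aim for a contradiction. Writing $U_n$ for the Fatou component of $z_n$, normality of $\{f^n\}$ on $U_0$ together with $z_0 \in I(f)$ forces every subsequential limit of $f^n$ on $U_0$ to equal $\infty$ at $z_0$, hence $f^n \to \infty$ locally uniformly on $U_0$, and every $U_n \subset I(f)$. Throughout I will work under the additional assumption that each $U_n$ is simply connected, which I flag as the main technical point.

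The key step is to lift the dynamics to the logarithmic plane so that the hypothesis on the log derivative becomes honest expansion. Since $J(f) \neq \emptyset$, fix $a \in J(f)$; then $a \notin U_n$ for any $n$. Let $\Phi_a(w) := a + e^w$ be the universal covering $\mathbb{C} \to \mathbb{C} \setminus \{a\}$, and let $V_n$ be the connected component of $\Phi_a^{-1}(U_n)$ containing a lift $w_n$ of $z_n$, chosen consistently so that the lifts $F_n \colon V_n \to V_{n+1}$ of $f$ satisfy $F_n(w_n) = w_{n+1}$. Since each $U_n$ is simply connected and avoids $a$, the restriction $\Phi_a \colon V_n \to U_n$ is a biholomorphism; in particular each $V_n$ is simply connected and the fundamental-domain property $V_n \cap (V_n + 2\pi i) = \emptyset$ holds (else $\Phi_a$ would fail to be injective on $V_n$). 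Differentiating $\Phi_a \circ F_n = f \circ \Phi_a$ at $w_n$ gives
\[
F_n'(w_n) = \frac{(z_n - a)\, f'(z_n)}{f(z_n) - a},
\]
which, as $|z_n|, |f(z_n)| \to \infty$ while $a$ is fixed, is asymptotic to $z_n f'(z_n)/f(z_n)$; hypothesis~\eqref{orbiteq} therefore gives $|F_n'(w_n)| \geq \lambda' > 1$ for some fixed $\lambda'$ and all sufficiently large $n$.

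The contradiction comes from playing Schwarz-Pick against the fundamental-domain property. Each $V_n$ is a proper simply connected subdomain of $\mathbb{C}$, hence hyperbolic, and Schwarz-Pick applied to $F_n$ gives $\rho_{V_{n+1}}(w_{n+1})\,|F_n'(w_n)| \leq \rho_{V_n}(w_n)$; iterating yields $\rho_{V_n}(w_n) \to 0$ at least geometrically. The Koebe quarter theorem then forces $\operatorname{dist}(w_n, \partial V_n) \geq 1/(4\rho_{V_n}(w_n)) \to \infty$, so for large $n$ the domain $V_n$ contains a Euclidean ball $B(w_n, R_n)$ with $R_n > \pi$. Any such ball contains both $w_n - \pi i$ and $w_n + \pi i$, two points of $V_n$ differing by $2\pi i$, contradicting $V_n \cap (V_n + 2\pi i) = \emptyset$. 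The main obstacle I anticipate is justifying the simple connectivity of each $U_n$ so that $V_n$ is a single sheet of the covering and the $2\pi i$-separation holds; once that is in place, the core content is the clean tension between the exponential decay of $\rho_{V_n}(w_n)$ forced by Schwarz-Pick and the universal bound $\pi$ on $\operatorname{dist}(w_n, \partial V_n)$ implicit in the fundamental-domain property.
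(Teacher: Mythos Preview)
Your argument works, and the gap you flag is easily closed: once $U_0$ is simply connected, so is every $U_n$, since the image of a simply connected Fatou component of an entire function always lies in a simply connected Fatou component (the paper needs and cites exactly this fact, \cite[Lemma~4.2]{2011arXiv1112.5103R}). Your route, however, differs from the paper's. The paper stays Euclidean: a Hayman-type covering lemma yields a ``Koebe~$\tfrac14$'' statement for Fatou components---if $B(z_0,\delta)\subset U_0$ then $B\bigl(f^n(z_0),\tfrac14\delta\,|(f^n)'(z_0)|\bigr)\subset F(f)$---and since this ball must miss a fixed Julia point $w$ with $|w|<|f^n(z_0)|$, one obtains the uniform bound $|(f^n)'(z_0)/f^n(z_0)|\le 8/\delta$, which collides directly with the chain-rule lower bound $\lambda^n/|z_0|$. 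You instead lift through $\Phi_a$, convert the logarithmic-derivative hypothesis into honest expansion $|F_n'(w_n)|\ge\lambda'$, and let Schwarz--Pick drive $\rho_{V_n}(w_n)\to 0$; the contradiction then comes from the $2\pi i$-fundamental-domain bound on $\operatorname{dist}(w_n,\partial V_n)$. The paper's version is shorter and needs no hyperbolic metric, while yours makes the expansion mechanism more transparent and connects naturally to the logarithmic-coordinate techniques familiar from class~$\mathcal{B}$.
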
 
For $\zeta \iscomplex$ and $ \rho > 0$, we define a disc $$B(\zeta, \rho) = \{ z : |z - \zeta| < \rho \},$$ and a circle $$C(\zeta, \rho) = \{ z : |z - \zeta| = \rho \}.$$ To prove Theorem~\ref{TinJulia}, we use the following, which follows straightforwardly from of a result of Hayman \cite[Theorem 4.13]{hayman1994multivalent}.
\begin{lemma}
\label{hay}
Suppose that $f$ is analytic in $B(z_0, r)$, and that $0 < R < r|f'(z_0)|/4$. Then there exists $R' > R$ such that $C(f(z_0), R') \subset f(B(z_0, r))$.
\end{lemma}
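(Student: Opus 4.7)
\medskip

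The plan is to use Hayman's Theorem 4.13 to produce a relatively compact ``covering component'' of $z_0$ on which $f$ is a proper holomorphic map, and then to deduce the circle covering by a short compactness argument. After translating so that $z_0=0$ and $f(0)=0$, I introduce, for $\rho>0$, the connected component $U(\rho)$ of $0$ in the open set $\{z\in B(0,r):|f(z)|<\rho\}$. The sets $U(\rho)$ are increasing and nested, and I let
\[
\rho^*=\sup\{\rho>0:\overline{U(\rho)}\subset B(0,r)\},
\]
so that for every $\rho<\rho^*$ the closure $\overline{U(\rho)}$ is a compact subset of $B(0,r)$.

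The content of Hayman's Theorem 4.13 is that this ``proper covering radius'' satisfies the Koebe-type lower bound $\rho^*\ge r|f'(0)|/4$, even though $f$ need not be univalent: one simply needs to analyse the branched covering $f|_{U(\rho)}\to\{|w|<\rho\}$ for $\rho<\rho^*$ and run the standard Koebe argument on a local branch of $f^{-1}$ at $0$. Granting this, the hypothesis $R<r|f'(0)|/4$ gives $R<\rho^*$, so I may pick any $R'$ with $R<R'<\rho^*$; by construction $\overline{U(R')}$ is compactly contained in $B(0,r)$ and $\partial U(R')\subset\{|f|=R'\}$.

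For such $R'$ the restriction $f|_{U(R')}\colon U(R')\to\{|w|<R'\}$ is a proper holomorphic map, hence a surjective finite-degree branched covering. To show that $C(0,R')\subset f(B(0,r))$, I take any $w_0$ with $|w_0|=R'$ and approximate it by a sequence $w_n\in\{|w|<R'\}$ with $w_n\to w_0$. Surjectivity yields preimages $z_n\in U(R')$ with $f(z_n)=w_n$, and relative compactness of $\overline{U(R')}$ in $B(0,r)$ lets me pass to a subsequence with $z_{n_k}\to z^*\in\overline{U(R')}\subset B(0,r)$. Continuity of $f$ gives $f(z^*)=w_0$, so $w_0\in f(B(0,r))$. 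Untranslating recovers the statement for general $z_0$.

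The only real obstacle is extracting the precise compactness bound from Hayman's theorem; once $\rho^*\ge r|f'(0)|/4$ is in hand, the remainder is an elementary properness/compactness argument and explains the author's remark that the lemma ``follows straightforwardly'' from Hayman's result.
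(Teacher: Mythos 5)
There is a genuine gap, and it is fatal to the argument as written. Your entire derivation rests on the claim that Hayman's Theorem 4.13 gives the lower bound $\rho^*\ge r|f'(z_0)|/4$ for your ``proper covering radius''. That claim is false, and it is not what Hayman's theorem says. Indeed, for every $\rho<\rho^*$ the map $f\colon U(\rho)\to B(f(z_0),\rho)$ is, as you note, proper and hence surjective, so your bound would imply that $f(B(z_0,r))$ contains the full disc $B\bigl(f(z_0),\,r|f'(z_0)|/4\bigr)$ --- a Koebe quarter theorem with no univalence hypothesis. This is false: take $z_0=0$, $r=1$ and $f(z)=(e^{nz}-1)/n$, so that $f(0)=0$ and $f'(0)=1$, while $f$ omits the value $-1/n$; hence the image contains no disc about $0$ of radius exceeding $1/n$, and consequently $\rho^*\le 1/n$, which is far smaller than $r|f'(0)|/4=1/4$ once $n\ge 5$. (The lemma itself survives for this example because the image contains large circles centred at $0$ that avoid the omitted point, but these circles do not bound discs lying in the image --- which is exactly why the circle statement is so much weaker than the disc statement you are implicitly proving.)

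The paper offers no argument beyond the citation: Lemma~\ref{hay} is asserted to follow directly from Hayman's Theorem 4.13, which is a circle-covering theorem (proved by circular symmetrization) asserting essentially the statement of the lemma in normalised form. That symmetrization result is the genuinely hard content here, and it cannot be recovered by the elementary properness/compactness argument you give; your scheme of relatively compact level components $U(\rho)$ proves disc covering up to radius $\rho^*$, a quantity that in general bears no relation to $r|f'(z_0)|/4$. The final compactness step (approximating a point of $C(f(z_0),R')$ by interior points and extracting a convergent subsequence in $\overline{U(R')}$) is sound but superfluous once one has disc covering, and in any case it inherits the unjustified hypothesis $R<\rho^*$. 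To repair the proof you would need to either quote Hayman's Theorem 4.13 in its actual form and perform only the routine rescaling/translation, or reproduce a symmetrization-type argument, not a Koebe-type one.
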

The following corollary is immediate, since, by \cite[Lemma 4.2]{2011arXiv1112.5103R}, the image of a simply connected Fatou component is contained in a simply connected Fatou component. This corollary can be seen as a Koebe $\frac{1}{4}$ theorem for simply connected Fatou components, with no requirement of univalence.
\begin{corollary}
\label{c1}
Suppose that $f$ is a {\tef}, that $U$ is a simply connected Fatou component of $f$, and that $U_1$ is the Fatou component containing $f(U)$. Suppose also that $z_0\in U$, that $r>0$ is such that $B(z_0, r) \subset U$, and that $f'(z_0) \ne 0$. Then $$B\left(f(z_0), \frac{r|f'(z_0)|}{4}\right) \subset U_1.$$
\end{corollary}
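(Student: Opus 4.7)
The plan is to extract the disc from the circle using simple connectivity. The core tool is Lemma~\ref{hay}, applied at $z_0$ on the ball $B(z_0,r)$: for every $R$ with $0<R<r|f'(z_0)|/4$ there is some $R'>R$ such that the circle $C(f(z_0),R')$ is contained in the image $f(B(z_0,r))$. Since $B(z_0,r)\subset U$, this circle is contained in $f(U)$, and hence in $U_1$.

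Next I would invoke the result cited from \cite{2011arXiv1112.5103R}: because $U$ is simply connected, the Fatou component $U_1$ containing $f(U)$ is also simply connected. Simple connectivity of $U_1$ forces every Jordan curve in $U_1$ to bound a topological disc in $U_1$; in particular the circle $C(f(z_0),R')$ bounds the open disc $B(f(z_0),R')\subset U_1$. Thus $B(f(z_0),R)\subset B(f(z_0),R')\subset U_1$.

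Finally, since this inclusion holds for every $R<r|f'(z_0)|/4$, taking the union gives $B(f(z_0),r|f'(z_0)|/4)\subset U_1$, as required.

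There is no real obstacle here once Lemma~\ref{hay} and the simple-connectivity transfer result are in hand; the only subtle point is that Lemma~\ref{hay} does not directly produce the full disc inside $f(B(z_0,r))$ (this would require univalence, as in the classical Koebe $1/4$ theorem), but the filling-in step is purchased "for free" from the topology of $U_1$ rather than from the local map $f$. This is precisely why the conclusion is stated in $U_1$ rather than inside $f(B(z_0,r))$ itself.
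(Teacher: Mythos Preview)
Your argument is correct and matches the paper's intended proof: the paper states the corollary as immediate from Lemma~\ref{hay} together with the fact (from \cite{2011arXiv1112.5103R}) that $U_1$ is simply connected, and you have spelled out precisely this filling-in argument. The observation in your final paragraph, that simple connectivity of $U_1$ substitutes for univalence, is exactly the point.
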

%
%
If $U$ is a Fatou component such that $U \cap I(f)\ne\emptyset$, then $U \subset I(f)$ by normality. We call a Fatou component in $I(f)$ \itshape escaping\normalfont. We deduce from Corollary~\ref{c1} that there is an upper bound on the absolute values of the logarithmic derivatives of the iterates of a {\tef} in a compact subset of a simply connected escaping Fatou component.
\begin{lemma}
\label{l2}
Suppose that $f$ is a {\tef}, that $U$ is a simply connected escaping Fatou component of $f$, and that $K$ is a compact subset of $U$. 
Then there exist $C= C(K)>0$ and $N=N(K)\isnatural$ such that
\begin{equation*}
\left|\frac{(f^{n})'(z)}{f^n(z)}\right| \leq C, \qfor n\geq N, z\in K.
\end{equation*}
\end{lemma}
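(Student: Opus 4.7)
The plan is to exploit the Schwarz--Pick inequality between the hyperbolic metrics of $U$ and its forward images. For $n\isnatural$, write $U_n$ for the Fatou component containing $f^n(U)$; by the result of Rippon--Stallard cited just before Corollary~\ref{c1}, the image under $f$ of a simply connected Fatou component is contained in a simply connected Fatou component, so by induction every $U_n$ is simply connected. Each $U_n$ is also a proper subdomain of $\mathbb{C}$ because $J(f)\ne\emptyset$. Consequently each $U_n$ carries a hyperbolic metric $\rho_{U_n}(w)\,|dw|$, and applying the Schwarz--Pick inequality to the holomorphic map $f^n\colon U\to U_n$ gives
$$|(f^n)'(z)|\,\rho_{U_n}(f^n(z)) \leq \rho_U(z), \qfor z\in U,\ n\isnatural.$$
Setting $C_1 := \sup_{z\in K}\rho_U(z)$, which is finite by compactness of $K$ in $U$, this yields $|(f^n)'(z)| \leq C_1/\rho_{U_n}(f^n(z))$ for $z\in K$.

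The next step is to apply the standard Koebe estimate $\rho_V(w)\ge 1/(2\operatorname{dist}(w,\partial V))$, valid for any proper simply connected domain $V\subset\mathbb{C}$. Choose once and for all some $p\in J(f)$; since $U_n\subset F(f)$ we have $p\notin U_n$, hence $\operatorname{dist}(f^n(z),\partial U_n)\leq |f^n(z)-p|$. Combining these estimates gives
$$|(f^n)'(z)| \leq 2C_1\, |f^n(z)-p| \qfor z\in K,\ n\isnatural.$$

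Finally, because $U$ is escaping and $K$ is compact, $f^n\to\infty$ uniformly on $K$, so there exists $N=N(K)\isnatural$ with $|f^n(z)|\ge 2|p|$ for all $n\geq N$ and $z\in K$. Dividing the previous display by $|f^n(z)|$ yields
$$\left|\frac{(f^n)'(z)}{f^n(z)}\right| \leq 2C_1\left(1+\frac{|p|}{|f^n(z)|}\right) \leq 3C_1, \qfor n\geq N,\ z\in K,$$
so we may take $C=3C_1$. The only mildly delicate point is propagating simple connectedness along the forward orbit, which is handled entirely by the Rippon--Stallard lemma; everything else is standard Schwarz--Pick together with the Koebe $\tfrac{1}{4}$ estimate, and the asymptotics rely only on the definition of an escaping Fatou component.
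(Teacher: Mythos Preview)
Your argument is correct and reaches the same conclusion, but by a genuinely different route from the paper. The paper first derives Corollary~\ref{c1} from Hayman's covering lemma (Lemma~\ref{hay}) -- a Koebe-$\tfrac14$ statement for simply connected Fatou components that requires no univalence -- and then applies it to $f^n$: choosing $\delta>0$ with $B(z,\delta)\subset U$ for all $z\in K$, one gets $B\bigl(f^n(z),\tfrac{\delta}{4}|(f^n)'(z)|\bigr)\subset F(f)$, and the presence of a point $w\in J(f)$ forces $|(f^n)'(z)|\le \tfrac{8}{\delta}|f^n(z)|$ once $|f^n(z)|>|w|$. You instead use the Schwarz--Pick inequality together with the standard lower bound $\rho_V(w)\ge 1/(2\operatorname{dist}(w,\partial V))$ for simply connected $V$, which yields $|(f^n)'(z)|\le 2C_1\operatorname{dist}(f^n(z),\partial U_n)$ and hence the same final estimate. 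Both arguments rest on the Rippon--Stallard fact that the forward components $U_n$ stay simply connected; your hyperbolic-metric approach has the advantage of being self-contained (no appeal to Hayman's theorem is needed), while the paper's approach makes the non-univalent Koebe principle explicit as a standalone tool (Corollary~\ref{c1}) that is of some independent interest. In effect your $2C_1$ plays exactly the role of the paper's $8/\delta$, since $\rho_U\le 2/\delta$ on $K$.
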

\begin{proof}
Fix a value of $w\in J(f)$. Choose $N\isnatural$ such that $$|f^n(z)| > |w|, \qfor n\geq N, z \in K.$$ Let $\delta=\delta(K)>0$ be sufficiently small that $B(z, \delta) \subset U$, for all $z \in K$. We claim that 
\begin{equation}
\label{toprove}
\left|\frac{(f^{n})'(z)}{f^{n}(z)}\right| \leq \frac{8}{\delta}, \qfor n\geq N, z \in K.
\end{equation}
To prove this, let $n\geq N$ and $z \in K$. We may assume that $(f^n)'(z) \ne 0$, since otherwise there is nothing to prove. We apply Corollary~\ref{c1} to $f^n$ and deduce that
\begin{equation*}
B\left(f^n(z), \frac{\delta}{4}\left|\frac{(f^{n})'(z)}{f^{n}(z)}\right|\left|f^{n}(z)\right|\right) \subset F(f), \qfor n\geq N, z \in K.
\end{equation*}
Equation (\ref{toprove}) follows, since $w\notin F(f)$. This completes the proof.
\end{proof}
We now prove Theorem~\ref{TinJulia}.
\begin{proof}[Proof of Theorem~\ref{TinJulia}]
Suppose that $z_0 \in I(f)$ and that there exist $\lambda > 1$ and $N\isnatural$ such that (\ref{orbiteq}) holds. Taking a subsequence if necessary, we may assume that $N = 0$ and that $z_0 \ne 0$. Suppose also that $z_0$ is in a simply connected Fatou component of $f$.

We apply Lemma~\ref{l2} with $K = \{ z_0 \}$, and deduce that the sequence $\left(\left|\frac{(f^{n})'(z_0)}{f^n(z_0)}\right|\right)_{n\isnatural}$ is bounded above. However, this is a contradiction since, by the chain rule, $$\left|\frac{(f^{n})'(z_0)}{f^n(z_0)}\right| = \frac{1}{|z_0|}
\left|z_0\frac{f'(z_0)}{f(z_0)}\right|
\left|z_1\frac{f'(z_1)}{f(z_1)}\right|
\cdots 
\left|z_{n-1}\frac{f'(z_{n-1})}{f(z_{n-1})}\right|
\geq \frac{\lambda^n}{|z_0|}, \qfor n\isnatural.$$ 
\end{proof}
%
%
%
%
%
%
%
\section{The behaviour of $f\in \mathcal{E}_n$, for $n\geq 3$}
\label{S1a}
Let $f\in \mathcal{E}_n$, for some $n\geq 3$. Recall that $f(z) = \sum_{k=0}^{n-1} a_k \exp(\omega_n^k z)$, and $\omega_n~=~\exp(2\pi i/n)$. We construct $n$ large sets, in each of which $f$ behaves like a single exponential, and then prove several useful inequalities on the size of $f$ and its derivatives in these sets. We use these results later to construct a set $\mathcal{K} \subset J(f) \cap A(f)$, of positive area, in a similar manner to the constructions in \cite{MR1680622} and \cite{MR871679}.

We note that functions of the form considered in this paper are part of a more general class known as \itshape exponential polynomials\normalfont. See, for example, \cite{MR1501506} for an early paper on this class. However, we have not been able to identify the precise estimates we require about such functions in earlier work, and so we give all the detail necessary for a self-contained account of our results. \\

Choose a value of $\sigma$ such that 
\begin{equation}
\label{sigmadef}
0 < \sigma < \frac{1}{8\sqrt{2}}.
\end{equation} 
Fix a value of $\eta > 4/\sigma$ and note that $\eta > 8$. Fix also a value of $\tau$ sufficiently large that
\begin{equation}
\label{etaeq}
\tau \geq \frac{1}{2 \sin(\pi/n)}\log \frac{4n\eta \max \{|a_k| : 0\leq k\leq n-1\}}{\min \{|a_k| : 0\leq k\leq n-1\}} > 0.
\end{equation} 

Suppose that $\varname>0$ is large compared to $\tau$. Let $P(\varname)$ be the interior of the regular $n$-gon centred at the origin and with vertices at the points $$\frac{\varname}{\cos(\pi/n)} \exp\left(\frac{(2k+1)i\pi}{n}\right), \qfor k\in\{0,1,\cdots,n-1\}.$$ 

Define the domains 
\begin{equation}
\label{Qdef}
Q_k = \left\{z \exp\left(\frac{(1-2k)i\pi}{n}\right) : \operatorname{Re}(z) > 0, |\operatorname{Im}(z)| < \tau\right\}, \qfor k\in\{0,1,\cdots,n-1\}.
\end{equation}
 Roughly speaking, each $Q_k$ can be obtained by rotating a half-infinite horizontal strip of width $2\tau$ around the origin until a vertex of $P(\varname)$ is positioned centrally in the strip.

Set
\begin{equation}
\label{Rdef}
R(\varname) = \mathbb{C}\ \backslash\ \left(P(\varname) \cup \bigcup_{k=0}^{n-1} Q_k\right).
\end{equation}
The set $R(\varname)$ consists of $n$ unbounded simply connected components, which are arranged rotationally symmetrically. We label these $R_p(\varname)$, for $p \in \{0, 1, \cdots, n-1\}$, where $R_0(\varname)$ has unbounded intersection with the positive real axis, and $R_{p+1}(\varname)$ is obtained by rotating $R_p(\varname)$ clockwise around the origin by $2\pi/n$ radians; see Figure~\ref{casen5}. 
\begin{figure}[ht]
	\centering
	\includegraphics[width=12cm,height=9cm]{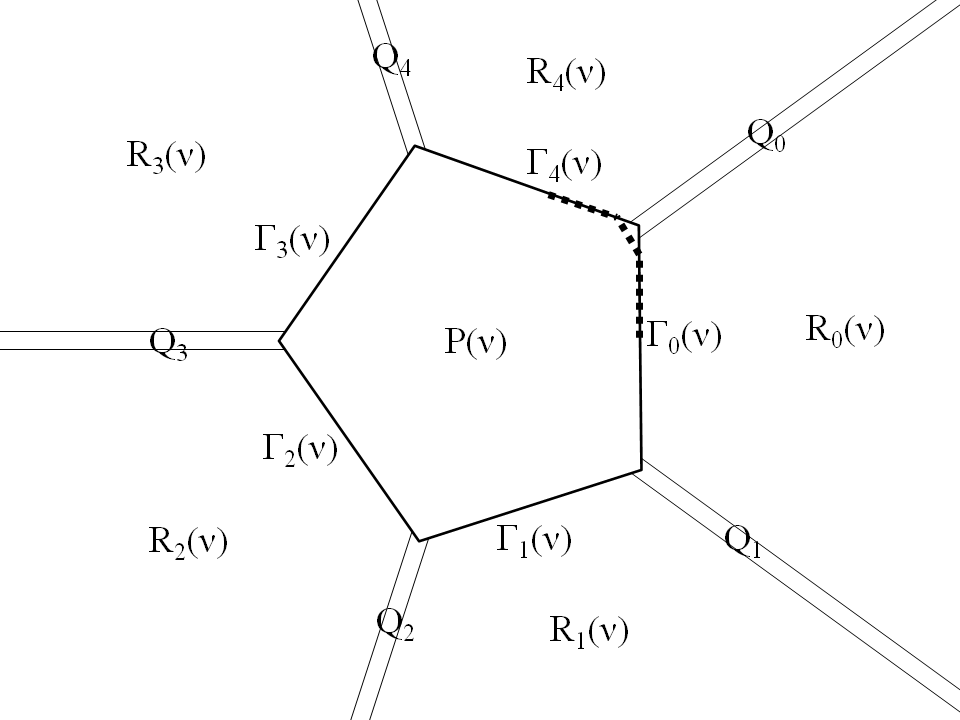}
	\caption{An illustration of the sets in the case $n=5$.}
	\label{casen5}
\end{figure}

We now prove a result which shows that if $\varname$ is sufficiently large, then $f$ behaves very like a single exponential in each component of $R(\varname)$.

Define {\tef}s $\psi_p$, for $p\in\{0, 1, \cdots, n-1\}$, by
\begin{equation}
\label{eqDp1}
\psi_p(z) = \frac{f(z)}{a_p\exp(\omega_n^p z)}  - 1.
\end{equation}

\begin{lemma}
\label{ineq.lemma}
Suppose that $n\geq3$ and that $f\in \mathcal{E}_n$. Suppose that $\eta$, $\tau$, $R_p(\varname)$, $R(\varname)$ and $\psi_p$ are as defined above, for $p\in\{0, 1, \cdots, n-1\}$. Then there exists $\varname'>0$ such that the following holds. Suppose that $\varname \geq \varname'$. Then
\begin{equation}
\label{eqDp2}
\max\{|\psi_p(z)|,|\psi_p'(z)|,|\psi_p''(z)|\} \leq 1/\eta,\qfor z \in R_p(\varname), \ p\in\{0, 1, \cdots, n-1\}.
\end{equation}
Moreover, there exists a constant $\epsilon_0\in(0,1)$, independent of $\varname$, such that, for all $z \in R(\varname)$, 
\begin{equation}
\label{uniform.expansion}
|f'(z)| > 2,
\end{equation}
\begin{equation}
\label{bounded.nonlinearity}
\left|\frac{f''(z)}{f'(z)}\right| < 2,
\end{equation}
\begin{equation}
\label{there.is.lambda}
\left|z\frac{f'(z)}{f(z)}\right| > 2,
\end{equation}
and finally
\begin{equation}
\label{there.is.epsilon}
|f(z)| > \max\{e^{\epsilon_0 \varname}, M(\epsilon_0|z|,f)\}.
\end{equation}
\end{lemma}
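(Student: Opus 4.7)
The plan is to show that $f$ is approximately the single exponential $a_p\exp(\omega_n^p z)$ in each $R_p(\nu)$, deduce the bound (\ref{eqDp2}) on $\psi_p$ and its derivatives directly from this, and then read off (\ref{uniform.expansion})--(\ref{there.is.epsilon}) from the factorisation $f(z) = a_p\exp(\omega_n^p z)(1+\psi_p(z))$ together with analogous factorisations of $f'$ and $f''$. By the rotational symmetry $R_p(\nu) = \omega_n^{-p}R_0(\nu)$, combined with the fact that $z \mapsto \omega_n^{-p} z$ cyclically permutes the summands of $f$, it suffices to treat the case $p = 0$.

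The key geometric observation is that $\omega_n^k - 1 = 2i\sin(\pi k/n)e^{i\pi k/n}$, so the line $\{z : \operatorname{Re}((\omega_n^k-1)z) = 0\}$ passes through the origin in the direction $\arg z \equiv -\pi k/n \pmod \pi$. For the two adjacent indices $k \in \{1, n-1\}$ this line is precisely the central ray of $Q_1$ at angle $-\pi/n$ or of $Q_0$ at angle $\pi/n$; for the non-adjacent indices $k \in \{2, \ldots, n-2\}$ (a nonempty range only when $n \geq 4$) the corresponding direction lies strictly outside the angular range of $R_0$, so the equal-modulus line does not meet $R_0(\nu)$, and a check at $\arg z = 0$ then shows $\operatorname{Re}((\omega_n^k - 1)z) < 0$ throughout $R_0$ for every $k \ne 0$.

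Quantitatively, for adjacent $k$ any $z \in R_0(\nu)$ is at perpendicular distance at least $\tau$ from the central ray of the relevant strip on the $R_0$-side, so $|\omega_n^k - 1| = 2\sin(\pi/n)$ gives $\operatorname{Re}((\omega_n^k - 1)z) \leq -2\sin(\pi/n)\tau$. For non-adjacent $k$ the angular separation between $\arg z$ and the equal-modulus direction is bounded below by a positive constant depending only on $n$, and combined with $|z| \geq \nu$ this yields $\operatorname{Re}((\omega_n^k - 1)z) \leq -c_n\nu$; enlarging $\nu'$ if necessary, the adjacent bound is the binding one. Applying (\ref{etaeq}) in the form $\exp(-2\sin(\pi/n)\tau) \leq \min|a_k|/(4n\eta\max|a_k|)$ then gives $|\psi_0(z)| \leq (n-1)/(4n\eta) < 1/\eta$, and termwise differentiation introduces extra factors $|\omega_n^k - 1| \leq 2$ and $|\omega_n^k - 1|^2 \leq 4$ which the factor $4n$ in (\ref{etaeq}) absorbs, yielding the same $1/\eta$ bound on $|\psi_0'(z)|$ and $|\psi_0''(z)|$.

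For (\ref{uniform.expansion})--(\ref{there.is.epsilon}) I would write $f(z) = a_0 e^z(1+\psi_0(z))$, $f'(z) = a_0 e^z(1+\widetilde\psi_0(z))$ and $f''(z) = a_0 e^z(1+\widehat\psi_0(z))$, where $\widetilde\psi_0$ and $\widehat\psi_0$ are analogues of $\psi_0$ with $a_k$ replaced by $a_k\omega_n^k$ and $a_k\omega_n^{2k}$ respectively (the additional modulus factors are $1$, so the same bound $1/\eta$ holds). Using that $R_0(\nu)$ lies to the right of the corresponding side of $P(\nu)$ so $\operatorname{Re}(z) \geq \nu$ and also $\operatorname{Re}(z) \geq |z|\cos(\pi/n)$, and that $\eta > 8$ gives $1 - 1/\eta \geq 7/8$, the estimates $|f'(z)| \geq \tfrac{7}{8}|a_0|e^\nu$, $|f''(z)/f'(z)| \leq 9/7 < 2$, $|zf'(z)/f(z)| \geq 7|z|/9$ and $|f(z)| \geq \tfrac{7}{8}|a_0|\exp(|z|\cos(\pi/n))$ give (\ref{uniform.expansion})--(\ref{there.is.lambda}) for $\nu'$ large, and (\ref{there.is.epsilon}) for any fixed $\epsilon_0 \in (0, \cos(\pi/n))$ upon comparing with $e^{\epsilon_0\nu}$ and $M(\epsilon_0|z|, f) \leq e^{\epsilon_0|z|}\sum_k|a_k|$. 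The main obstacle is the geometric step for non-adjacent $k$: verifying the exclusion of the equal-modulus line from $R_0(\nu)$ and extracting a uniform angular separation, a task made slightly delicate by the fact that $\arg z$ is allowed to approach $\pm\pi/n$ as $|z|\to\infty$ along a strip boundary, but these checks reduce to elementary trigonometry once the picture is in place.
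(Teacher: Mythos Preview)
Your proposal is correct and follows essentially the same route as the paper: establish dominance of the single exponential term in $R_0(\nu)$ by controlling $\operatorname{Re}((\omega_n^k-1)z)$ (the paper phrases this geometrically as a comparison of perpendicular distances to the lines $\Gamma_p'(\nu)$, but it is the same computation, with the adjacent cases $k\in\{1,n-1\}$ handled by the choice of $\tau$ and the non-adjacent cases by enlarging $\nu'$), and then read off (\ref{uniform.expansion})--(\ref{there.is.epsilon}) from the factorisation of $f$, $f'$, $f''$. The only cosmetic difference is that the paper differentiates the product $f=a_0e^z(1+\psi_0)$ directly to obtain $f'=a_0e^z(1+\psi_0+\psi_0')$ and $f''=a_0e^z(1+\psi_0+2\psi_0'+\psi_0'')$, rather than introducing your auxiliary functions $\widetilde\psi_0$ and $\widehat\psi_0$; both formulations yield the required $1/\eta$ control.
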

\begin{proof}
For $p\in\{0,1,\cdots,n-1\}$, define $\Gamma_p(\varname) = \partial P(\varname) \cap \partial R_p(\varname),$ and let $\Gamma_p'(\varname)$ be the unbounded line formed by continuing $\Gamma_p(\varname)$ to infinity in both directions. 

The following elementary observation relates the moduli of the summands in $f$ to the geometry of Figure~\ref{casen5}. Since $|\exp(z)| = e^{\operatorname{Re}(z)}$, the value of $|a_0 \exp(z)|$ is determined only by the signed perpendicular distance from $z$ to $\Gamma_0'(\varname)$. By an obvious argument from symmetry, the value of $|a_p \exp(\omega_n^p z)|$ is similarly determined by the signed perpendicular distance from $z$ to $\Gamma_p'(\varname)$, for $p\in\{1, 2, \cdots, n-1\}$.

Suppose that $z \in R_0(\varname)$. We claim that, for all sufficiently large values of $\varname$, we have that 
\begin{equation}
\label{a0like}
|a_0 \exp(z)| \geq 4n\eta |a_p \exp(\omega_n^p z)|, \qfor p \in \{1, 2, 3, \cdots, n-1\}.
\end{equation}
It follows immediately from the observation above that we may choose $\varname'>0$, large compared to $\tau$, and sufficiently large that $$|a_0 \exp(z)| \geq 4n\eta |a_p \exp(\omega_n^p z)|, \qfor z \in R_0(\varname), \varname \geq \varname' \text{ and } p \in \{2, 3, \cdots, n-2\}.$$ It remains to consider the cases $p=1$ and $p=n-1$.

Suppose then that $p=1$. We refer to Figure~\ref{newfig}, in which $z$ is at a point labeled $D$. Here the lines $\Gamma_0'(\varname)$ and $\Gamma_{1}'(\varname)$ are shown as solid lines. We are interested, by the observations above, in the difference between the perpendicular distance from $z$ to $\Gamma_{1}'(\varname)$ -- which is the distance $CD$ on the diagram -- and the perpendicular distance from $z$ to $\Gamma_0'(\varname)$ -- which is the distance $AD$ on the diagram. We also show, in dashed, the line through $z$ parallel to the sides of $Q_1$, and the intersections of this line with $\Gamma_0'(\varname)$, at the point $B$, and $\Gamma_{1}'(\varname)$, at the point $X$. The angles $\angle ADB$ and $\angle CDB$ are quickly seen to be equal to $\pi/n$. A straightforward geometric exercise shows that $BX \geq 2\tau\tan (\pi/n)$. Hence $$AD - CD = BD \cos (\pi/n) - XD \cos (\pi/n) = BX \cos (\pi/n) \geq 2\tau \sin (\pi/n).$$

\begin{figure}[ht]
	\centering
	\includegraphics[width=12cm,height=9cm]{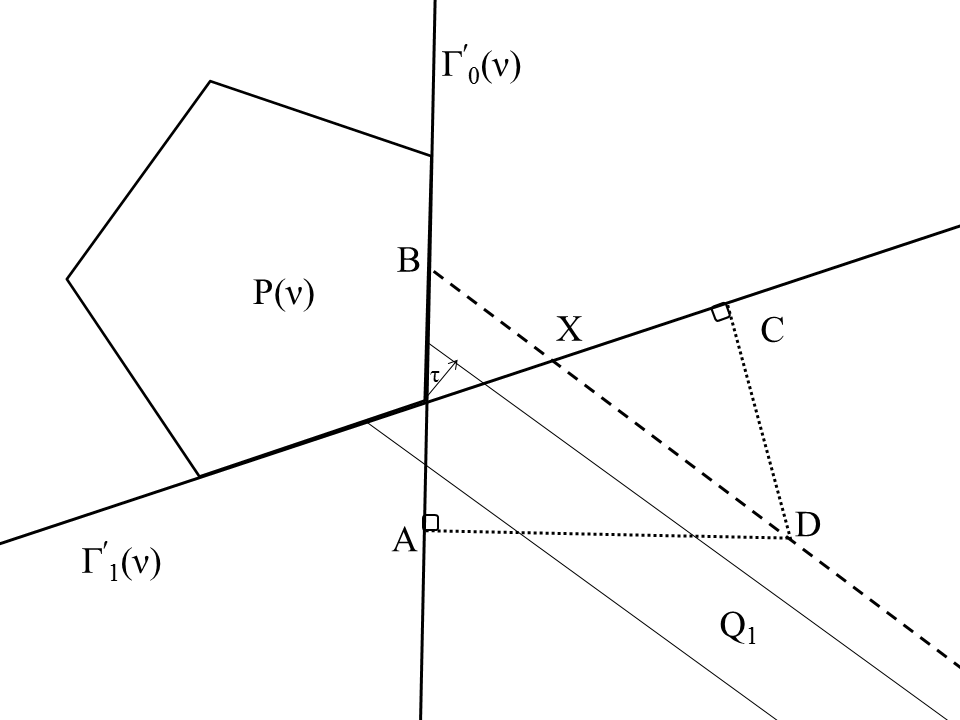}
	\caption{An illustration of the geometry in the case $n=5$ and $p=1$.}
	\label{newfig}
\end{figure}

The estimate (\ref{a0like}) in the case $p=1$ follows because, by (\ref{etaeq}), we deduce that $$\frac{|\exp(z)|}{|\exp(\omega_n^{n-1} z)|} \geq \frac{4n\eta \max \{|a_k| : 0\leq k\leq n-1\}}{\min \{|a_k| : 0\leq k\leq n-1\}}.$$ The case $p =n-1$ is very similar. This completes the proof of (\ref{a0like}).\\

We deduce that
\begin{equation}
\label{eqa0}
|a_0 \exp(z)| \geq 4\eta \sum_{k=1}^{n-1} |a_k \exp(\omega_n^k z)|, \qfor z \in R_0(\varname).
\end{equation}

It follows from (\ref{eqa0}) that $$|\psi_0(z)| = \left|\frac{\sum_{k=1}^{n-1} a_k \exp(\omega_n^k z)}{a_0\exp(z)}\right| \leq \frac{\sum_{k=1}^{n-1} |a_k \exp(\omega_n^k z)|}{|a_0\exp(z)|} \leq \frac{1}{4\eta},\qfor z \in R_0(\varname).$$ Similarly, by differentiating, $$|\psi_0'(z)| = \left|\frac{\sum_{k=1}^{n-1} a_k (\omega_n^k-1) \exp(\omega_n^k z)}{a_0\exp(z)}\right| \leq \frac{\sum_{k=1}^{n-1} |2a_k \exp(\omega_n^k z)|}{|a_0\exp(z)|} \leq \frac{1}{2\eta},\qfor z \in R_0(\varname),$$ and finally, differentiating again, $$|\psi_0''(z)| = \left|\frac{\sum_{k=1}^{n-1} a_k (\omega_n^k-1)^2 \exp(\omega_n^k z)}{a_0\exp(z)}\right| \leq \frac{1}{\eta},\qfor z \in R_0(\varname).$$ This proves (\ref{eqDp2}) for $z \in R_0(\varname)$, and (\ref{eqDp2}) follows by similar arguments in the domains $R_p(\varname)$, for $p \in \{1, 2, \cdots, n - 1\}$, choosing $\varname'$ larger if necessary. \\

For the rest of the lemma, we may assume for simplicity that $z \in R_0(\varname)$ and $\varname \geq \varname'$, since the result for $z$ in another component of $R(\varname)$ follows similarly. From (\ref{eqDp1}) we deduce that $$f(z) = a_0e^z(1 + \psi_0(z)),$$ 
\begin{equation}
\label{fdasheq}
f'(z) = a_0 e^z(1 + \psi_0(z) + \psi_0'(z)),
\end{equation}
and $$f''(z) = a_0 e^z(1 + \psi_0(z) +  2 \psi_0'(z) + \psi_0''(z)).$$

Hence, taking $\varname'$ larger than the value taken earlier, if necessary, it follows from the choice of $\eta$ and from (\ref{eqDp2}) that if $z \in R_0(\varname)$ and $\varname \geq \varname'$, then $$|f'(z)| > \frac{|a_0 e^z|}{2}> 2,$$ $$\frac{|f''(z)|}{|f'(z)|} = \frac{|1 + \psi_0(z) +  2 \psi_0'(z) + \psi_0''(z)|}{|1 + \psi_0(z) + \psi_0'(z)|} < \frac{1 + 4/\eta}{1 - 2/\eta} < 2,$$ $$\left|z\frac{f'(z)}{f(z)}\right| = |z| \frac{|1 + \psi_0(z) + \psi_0'(z)|}{|1 + \psi_0(z)|} > |z|\frac{1-2/\eta}{1+1/\eta} > 2,$$ and $$|f(z)| > \frac{|a_0e^z|}{2} = \frac{|a_0| e^{\operatorname{Re}(z)}}{2} > e^{\frac{1}{2}\varname}.$$  

Equations (\ref{uniform.expansion}), (\ref{bounded.nonlinearity}), (\ref{there.is.lambda}) and the first part of the maximum in (\ref{there.is.epsilon}) follow. For the second part of the maximum in (\ref{there.is.epsilon}), we may suppose that $$0 < \epsilon_0 < \frac{1}{2} \cos \frac{\pi}{n},$$ in which case $\epsilon_0|z| \leq \frac{1}{2} \operatorname{Re}(z)$. We deduce that $$M(\epsilon_0|z|,f) \leq \sum_{k=0}^{n-1} |a_k| e^{\epsilon_0|z|}  \leq e^{\frac{1}{2}\operatorname{Re}(z)} \sum_{k=0}^{n-1} |a_k|.$$ The result follows, once again taking $\varname'$ larger if necessary.
\end{proof}
We also require the following lemma which is a simplified version of \cite[Theorem~4.24]{MR565886}.
\begin{lemma}
\label{newlemma}
Suppose that $B\subset\mathbb{C}$ is a square of side $s$, that $g:B\to\mathbb{C}$ is analytic, and that $g'(z) \ne 0$, for $z \in B$. Suppose also that $$s \sup_{z\in B} \left|\frac{g''(z)}{g'(z)}\right| \leq \frac{1}{\sqrt{2}s + 1}.$$ Then $g$ is conformal in $B$.
\end{lemma}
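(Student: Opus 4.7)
The plan is to prove that $g$ is injective on $B$; combined with the standing assumption $g'\ne 0$, this gives conformality. I would use the classical Noshiro--Warschawski argument: if the derivative of a holomorphic function on a convex domain takes values in some open half-plane, then the function is univalent.

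Since $g'$ is nowhere zero on the simply connected square $B$, there is a single-valued analytic branch of $\log g'$ on $B$, with derivative $g''/g'$. I would fix $z_0$ at the centre of $B$, so that $|z-z_0|\le s\sqrt{2}/2$ for every $z\in B$. Integrating $g''/g'$ along the straight segment from $z_0$ to $z$ (which lies in $B$ by convexity) gives
\[
\log\!\bigl(g'(z)/g'(z_0)\bigr)=\int_{z_0}^{z}\frac{g''(w)}{g'(w)}\,dw,
\]
whose modulus is at most $Ms\sqrt{2}/2$, where $M=\sup_{B}|g''/g'|$. The hypothesis supplies $sM\le 1/(\sqrt{2}s+1)\le 1$, and therefore $Ms\sqrt{2}/2\le \sqrt{2}/2<\pi/2$. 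Consequently $g'(z)/g'(z_0)$ lies in the open right half-plane for every $z\in B$.

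To finish, for any distinct $z_1,z_2\in B$ I would write
\[
\frac{g(z_2)-g(z_1)}{g'(z_0)\,(z_2-z_1)}=\int_0^1\frac{g'\bigl(z_1+t(z_2-z_1)\bigr)}{g'(z_0)}\,dt,
\]
whose integrand has positive real part throughout $[0,1]$; so the integral is nonzero and hence $g(z_1)\ne g(z_2)$. There is no genuine obstacle; the only care required is to anchor the branch of $\log g'$ at the centre of $B$ rather than at an arbitrary base point, so that the half-diagonal $s\sqrt{2}/2$ (rather than the full diagonal $s\sqrt{2}$) appears in the estimate, comfortably keeping $Ms\sqrt{2}/2$ below $\pi/2$.
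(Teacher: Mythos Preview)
Your argument is correct. The paper itself does not prove this lemma; it simply records it as a simplified version of \cite[Theorem~4.24]{MR565886} (Martio--Sarvas) and moves on. Your route---anchoring a branch of $\log g'$ at the centre of the square, bounding its variation by $Ms\sqrt{2}/2$, and then invoking the Noshiro--Warschawski criterion on the convex set $B$---is entirely self-contained and more elementary than appealing to the general injectivity theorem in the reference. Note too that your estimate actually uses only $sM\le 1$ to obtain $Ms\sqrt{2}/2\le\sqrt{2}/2<\pi/2$, so you have in fact proved univalence under the weaker hypothesis $s\sup_{B}|g''/g'|<\pi/\sqrt{2}$; the sharper constant $1/(\sqrt{2}s+1)$ coming from Martio--Sarvas is not needed for the application in this paper. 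The trade-off is that the cited theorem covers far more general situations (including higher dimensions), while your argument is specific to convex planar domains but requires nothing beyond elementary complex analysis.
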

The following corollary of these results is central to the proof of Theorem~\ref{T2}.
\begin{lemma}
\label{uvlemma}
Suppose that $n\geq3$ and that $f\in \mathcal{E}_n$. Suppose that $B\subset R(\varname')$ is a square of side $\sigma$, where $R(\varname')$ is as defined in (\ref{Rdef}), $\varname'$ is as defined in Lemma~\ref{ineq.lemma}, and $\sigma$ is as defined in (\ref{sigmadef}).  Then $f$ is conformal in $B$.
\end{lemma}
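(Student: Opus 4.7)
The plan is to invoke Lemma~\ref{newlemma} directly with $g = f$ and $s = \sigma$ on the square $B$. Two hypotheses must be verified: that $f'$ does not vanish on $B$, and that the quantitative bound
\[
s \sup_{z\in B}\left|\frac{f''(z)}{f'(z)}\right| \leq \frac{1}{\sqrt{2}s + 1}
\]
holds for $s=\sigma$.

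The non-vanishing of $f'$ on $B$ is immediate from (\ref{uniform.expansion}) in Lemma~\ref{ineq.lemma}, since $B \subset R(\varname')$ gives $|f'(z)| > 2 > 0$ on $B$. For the second condition, I would use (\ref{bounded.nonlinearity}), which provides the uniform estimate $|f''(z)/f'(z)| < 2$ on the whole of $R(\varname')$, and in particular on $B$. Therefore
\[
\sigma \sup_{z\in B}\left|\frac{f''(z)}{f'(z)}\right| < 2\sigma,
\]
so it suffices to verify the purely numerical inequality $2\sigma(\sqrt{2}\sigma + 1) < 1$, i.e.\ $2\sqrt{2}\sigma^2 + 2\sigma < 1$. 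This is where the specific choice of $\sigma$ in (\ref{sigmadef}) is used: since $\sigma < 1/(8\sqrt{2})$, we have $2\sigma < 1/(4\sqrt{2})$ and $2\sqrt{2}\sigma^2 < \sqrt{2}/64$, and the sum is comfortably less than $1$.

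There is no real obstacle here — the lemma is essentially a packaging step that combines the two estimates from Lemma~\ref{ineq.lemma} with Lemma~\ref{newlemma}. The only mild check is that the constant $\sigma < 1/(8\sqrt{2})$ was chosen precisely so that the bound $|f''/f'|<2$ on $R(\varname')$ feeds into the hypothesis of Lemma~\ref{newlemma}; if anything, the proof of this lemma retroactively explains why the constants in (\ref{sigmadef}) and in the definition of $\eta$ were picked the way they were. Once the numerical check is in place, Lemma~\ref{newlemma} immediately yields conformality of $f$ on $B$.
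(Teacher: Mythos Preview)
Your proof is correct and follows exactly the approach of the paper, which simply states that the result follows from (\ref{sigmadef}), (\ref{bounded.nonlinearity}), and Lemma~\ref{newlemma}. You have merely unpacked the numerical verification that the paper leaves implicit, and additionally noted (via (\ref{uniform.expansion})) that $f'\ne 0$ on $B$, which is required by Lemma~\ref{newlemma} but not explicitly cited in the paper's one-line proof.
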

\begin{proof}
This follows from (\ref{sigmadef}) and (\ref{bounded.nonlinearity}), and from Lemma~\ref{newlemma}.
\end{proof}
%
%
%
%
%
%
%
\section{Distortion and nonlinearity}
\label{S3}
In this section we give some preliminary definitions and results that will be used in the proof of Theorem~\ref{T2}. Suppose that $D$ is a bounded subset of $\mathbb{C}$, and that $f$ is a map which is analytic in a neighbourhood of $D$. We say that $f$ has \itshape bounded distortion \normalfont on $D$ if there exist constants $c, \ C > 0$, depending only on $f$, such that 
\begin{equation}
\label{disteq}
c <\frac{|f(x) - f(y)|}{|x - y|} < C, \qfor x,y \in D, x \ne y.
\end{equation}
We define the \itshape distortion \normalfont of $f$ in $D$ by $$L(f|_D) = \inf\{ C/c  : (c,\ C) \text{ satisfies } (\ref{disteq}) \}.$$ It is well-known that if $f$ and $g$ are {\tef}s, then
\begin{equation}
\label{inv.dist}
L(f|_D) = L(f^{-1}|_{f(D)})
\end{equation} 
\begin{equation}
\label{comp.dist}
L(f \circ g|_D) \leq L(f|_{g(D)})L(g|_D)
\end{equation} 
and finally
\begin{equation}
\label{distortioneq}
\frac{\operatorname{area}(f(A)\cap f(D))}{\operatorname{area}(f(D))} \leq L(f|_D)^2 \frac{\operatorname{area}(A \cap D)}{\operatorname{area}(D)}, \qfor A \subset D.
\end{equation}
Here $\operatorname{area}$ denotes plane Lebesgue measure.

Following McMullen \cite{MR871679}, we define the \itshape nonlinearity \normalfont of $f$ in $D$ by $$N(f|_D) = \left(\sup_{z\in D} \left|\frac{f''(z)}{f'(z)}\right|\right) \operatorname{diam}(D),$$ where $\operatorname{diam}$ denotes Euclidean diameter. We use the following lemma \cite[Lemma~2.1]{MR2367099}.
\begin{lemma}
\label{schubert}
Suppose that $f$ is a {\tef}. Suppose also that $D$ is a square such that $N(f|_D) < \frac{1}{4}$ and such that $f$ is conformal in a neighbourhood of $D$. Then $L(f|_D) \leq 1 + 8 N(f|_D)$.
\end{lemma}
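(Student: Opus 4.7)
The plan is to use the hypothesis to control the logarithmic derivative of $f'$ across $D$, convert this into a uniform bound on $f'/f'(w_0)$ for a chosen base point $w_0\in D$, and then compare $f(y)-f(x)$ with $f'(w_0)(y-x)$ by integrating along line segments, which the convex shape of $D$ permits.

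First, fix any $w_0\in D$. Since $f$ is conformal on a neighbourhood of $D$, $f'$ is non-vanishing on the simply connected square $D$, so a single-valued branch of $\log f'$ exists on $D$. For any $z\in D$, integrating $f''/f'$ along the straight segment from $w_0$ to $z$ (which lies in $D$ by convexity) gives
$$\bigl|\log f'(z) - \log f'(w_0)\bigr| \le |z-w_0|\sup_{w\in D}\left|\frac{f''(w)}{f'(w)}\right| \le N(f|_D) =: N.$$
Exponentiating and applying $|e^w-1|\le e^{|w|}-1$ then yields
$$\left|\frac{f'(z)}{f'(w_0)} - 1\right| \le e^N - 1, \qfor z\in D.$$

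Next, for $x,y\in D$, I would parametrise the segment from $x$ to $y$ (again in $D$ by convexity) and write
$$\frac{f(y)-f(x)}{(y-x)\,f'(w_0)} = \int_0^1 \frac{f'\bigl(x+t(y-x)\bigr)}{f'(w_0)}\,dt = 1 + \int_0^1\!\left(\frac{f'(x+t(y-x))}{f'(w_0)} - 1\right)\!dt,$$
so the modulus of the left side lies in the interval $[2-e^N,\,e^N]$. The hypothesis $N<\tfrac14<\log 2$ makes the lower bound positive, so taking the supremum and infimum over $x\neq y$ in $D$ gives
$$L(f|_D) \le \frac{e^N}{2-e^N} = 1 + \frac{2(e^N-1)}{2-e^N}.$$

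Finally, I would bound the remaining fraction by $8N$ using crude estimates valid on $[0,\tfrac14]$, for instance $e^N - 1 \le N/(1-N)$ (a geometric-series majorant of the Taylor series) together with $2-e^N \ge 2 - 1 - N/(1-N)$. The main obstacle is purely arithmetic bookkeeping to reach the stated constant $8$; but since the hypothesis $N<\tfrac14$ leaves a large gap before $\log 2$, essentially any elementary estimate closes this step, so no substantive difficulty arises.
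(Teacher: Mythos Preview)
Your argument is correct. The paper itself does not prove this lemma; it merely quotes it as \cite[Lemma~2.1]{MR2367099} (Schubert), so there is no in-paper proof to compare against. Your approach---bounding $|\log f'(z)-\log f'(w_0)|$ by $N$ via integration of $f''/f'$ along segments in the convex square, then comparing difference quotients to $f'(w_0)$ to obtain $L(f|_D)\le e^N/(2-e^N)$---is the standard route to such Koebe-type distortion bounds and is almost certainly what Schubert does as well. In fact your crude estimates $e^N-1\le N/(1-N)$ and $2-e^N\ge(1-2N)/(1-N)$ already give $L(f|_D)\le 1+2N/(1-2N)\le 1+4N$ on $[0,\tfrac14)$, comfortably within the stated $1+8N$; so the ``arithmetic bookkeeping'' you flag as the only obstacle is entirely routine.
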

We observe that \cite{MR871679} uses a similar result but with $L(f|_D) \leq 1 + O(N(f|_D))$, which would also be sufficient for our purposes.

We require the following result, which is a detailed version of \cite[Proposition~3.1]{MR871679}. Since this result is central to our work, and the proof in \cite{MR871679} is relatively brief, we give complete details.
\begin{lemma}
\label{distortion.lemma}
Suppose that $f$ is a {\tef}, and there exists a set $U\subset\mathbb{C}$ and constants $\alpha>1$ and $M > 0$ such that
\begin{equation}
\label{unifexp}
|f'(z)| > \alpha \quad\text{ and }\quad \left|\frac{f''(z)}{f'(z)}\right| < M, \qfor z \in U.
\end{equation}
Suppose also that there exists $s \in (0, (4\sqrt{2}M)^{-1})$ such that if $B \subset U$ is a square of side $s$, then $f$ is conformal in a neighbourhood of $B$. Suppose finally that $(B_n)_{n\isnatural}$ is a sequence of squares of side $s$, such that $$B_n \subset U \ \text{ and } \ B_{n+1} \subset f(B_n), \qfor n\isnatural.$$ For $n\isnatural$, let $\phi_n$ be the inverse branch of $f$ which maps $f(B_n)$ to $B_n$, and set $D_n = \phi_1 \circ \phi_2 \circ \cdots \circ \phi_{n}(f(B_n))$. Then there exists $\lambda=\lambda(M, s, \alpha)>0$ such that $$L(f^n|_{D_n})\leq \lambda.$$
\end{lemma}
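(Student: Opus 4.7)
The plan is to decompose $f^n|_{D_n}$ into the composition of its single-step restrictions along the nested sets
\[
V_k := \phi_k \circ \phi_{k+1} \circ \cdots \circ \phi_n(f(B_n)), \qquad k=1,\ldots,n,
\]
and apply (\ref{comp.dist}). By construction $V_n = B_n$, $V_k = \phi_k(V_{k+1}) \subset B_k$ (since $B_{k+1} \subset f(B_k)$ and $\phi_k$ maps into $B_k$), and $V_1 = D_n$. Since each $\phi_k$ inverts $f|_{B_k}$, one has $f(V_k) = V_{k+1}$, so $f^n|_{D_n} = f|_{V_n} \circ f|_{V_{n-1}} \circ \cdots \circ f|_{V_1}$, and by iterating (\ref{comp.dist})
\[
L(f^n|_{D_n}) \leq \prod_{k=1}^{n} L(f|_{V_k}).
\]
It therefore suffices to bound this product.

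Next I would control the diameters $d_k := \operatorname{diam}(V_k)$. The key point is that $V_{k+1} \subset B_{k+1}$ where $B_{k+1}$ is a convex square contained in the domain $f(B_k)$ of $\phi_k$. Thus, for any $x, y \in V_{k+1}$, the straight segment $[x,y]$ lies in $f(B_k)$, and integrating $\phi_k'$ along it and using $|\phi_k'| = 1/|f' \circ \phi_k| \leq 1/\alpha$ from (\ref{unifexp}) gives $d_k \leq d_{k+1}/\alpha$. Iterating from $d_n = \sqrt{2}\,s$ yields $d_k \leq \sqrt{2}\,s\,\alpha^{-(n-k)}$, and hence
\[
\sum_{k=1}^{n} d_k \leq \frac{\sqrt{2}\,s\,\alpha}{\alpha - 1}.
\]

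To bound $L(f|_{V_k})$ in terms of $d_k$, observe that $V_k \subset B_k$ with $B_k$ convex and $f$ conformal in a neighbourhood of $B_k$. For $z_1, z_2 \in V_k$, the segment $[z_1, z_2]$ lies in $B_k$ where $|f''/f'| < M$, so the path integral gives $|f'(\zeta)/f'(z_1)| \leq e^{Md_k}$ along the segment. Writing $f(z_2) - f(z_1) = (z_2 - z_1)\int_0^1 f'((1-t)z_1 + t z_2)\,dt$ and bounding the integral from above and below produces
\[
L(f|_{V_k}) \leq \frac{e^{2Md_k}}{2 - e^{Md_k}} \leq 1 + C_0 M d_k
\]
for a universal constant $C_0$, provided $Md_k$ is small enough; this is guaranteed by $d_k \leq \sqrt{2}\,s$ and $s < (4\sqrt{2}M)^{-1}$. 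Combining these estimates,
\[
L(f^n|_{D_n}) \leq \prod_{k=1}^{n} (1 + C_0 M d_k) \leq \exp\!\left(C_0 M \sum_{k=1}^n d_k\right) \leq \exp\!\left(\frac{C_0 M \sqrt{2}\,s\,\alpha}{\alpha - 1}\right) =: \lambda,
\]
which depends only on $M$, $s$, $\alpha$.

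The main subtlety is that the sets $V_k$ are not themselves squares, so Lemma~\ref{schubert} cannot be invoked verbatim; one must instead exploit that the enclosing set $B_k$ is a convex square where $f$ is conformal, so that the single-step distortion can be estimated directly via path integrals using the small diameter $d_k$ rather than the fixed side length $s$. Once this local estimate is established, the geometric decay of $d_k$, driven by the uniform expansion $|f'| > \alpha > 1$, makes the product telescope to a bounded constant.
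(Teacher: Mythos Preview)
Your argument is correct and follows essentially the same route as the paper: define the pull-back sets (your $V_k$ are exactly the paper's $D_{n,n-k}$), use $|f'|>\alpha$ to get geometrically decaying diameters, use the bound on $|f''/f'|$ to control the single-step distortion by $1+O(M\,\mathrm{diam})$, and multiply via (\ref{comp.dist}) to obtain a convergent product depending only on $M,s,\alpha$.

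The one noteworthy difference is your handling of the single-step distortion. The paper simply applies Lemma~\ref{schubert} to $D_{n,m}$, although that lemma as stated requires $D$ to be a square and the pull-backs $D_{n,m}$ generally are not. You correctly flag this and instead prove the estimate directly, using that $V_k$ sits inside the convex square $B_k\subset U$ so that straight segments between points of $V_k$ remain in the region where $|f''/f'|<M$; integrating $f''/f'$ along such segments gives $L(f|_{V_k})\leq 1+C_0 M d_k$ when $M d_k$ is small, which is guaranteed by $d_k\leq\sqrt{2}\,s$ and $s<(4\sqrt{2}M)^{-1}$. This is a cleaner justification of the same bound, and it makes explicit why the hypothesis on $s$ is needed.
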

In particular, we note that $\lambda$ is independent of both $n$ and the sequence $(B_n)_{n\isnatural}$.
\begin{proof}[Proof of Lemma~\ref{distortion.lemma}]
For $n\isnatural$ and $m\in\{0,1,\cdots,n-1\}$ define $$D_{n, m} = \phi_{n-m} \circ \phi_{n-m+1} \circ  \cdots \circ \phi_n (f(B_n)).$$

It follows from (\ref{unifexp}) that $\operatorname{diam} (D_{n,m}) \leq s \sqrt{2}\alpha^{-m}$. Hence, by  (\ref{unifexp}), we deduce that $$N(f|_{D_{n,m}}) \leq Ms \sqrt{2}\alpha^{-m} < \frac{1}{4}, \qfor m\in\{0,1,\cdots,n-1\}.$$ Thus, by Lemma~\ref{schubert}, $$L(f|_{D_{n,m}}) \leq 1 + 8 M s \sqrt{2}\alpha^{-m}, \qfor m\in\{0,1,\cdots,n-1\}.$$ Hence, by (\ref{comp.dist}), we have that 
\begin{align*}
L(f^n|_{D_n})    &\leq L(f|_{D_{n,n-1}}) L(f|_{D_{n,n-2}}) \ldots L(f|_{D_{n,0}})  \\
                 &\leq \prod_{m=0}^{n-1} (1 + 8M s \sqrt{2}\alpha^{-m}) \\
                 &\leq \prod_{m=0}^\infty (1 + 8M s \sqrt{2} \alpha^{-m}).
\end{align*}                 
\end{proof}
%
%
%
%
%
%
%
%
%
\section{Proof of Theorem~\ref{T2}}
\label{S3a}
%
%
We require the following alternative characterisation of $A(f)$ \cite[Theorem~2.7]{Rippon01102012}. Here we define $\mu_\epsilon(r) = M(\epsilon r,f)$, for $r > 0$ and $\epsilon>0.$ 
\begin{lemma}
\label{otherA}
Suppose that $f$ is a {\tef} and that $\epsilon > 0$. Suppose also that $R > 0$ is sufficiently large that $\mu_\epsilon(r) > r$, for $r \geq R$. Then
\begin{equation*}
A(f) = \{z : \text{there exists } \ell \isnatural \text{ such that } |f^{n+\ell}(z)|\geq \mu_\epsilon^n(R), \text{ for } n\isnatural \}.
\end{equation*}
\end{lemma}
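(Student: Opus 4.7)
The plan is to show that the set on the right-hand side, which we temporarily call $\tilde{A}(f)$, equals $A(f)$ by comparing the sequences $M^n(R,f)$ and $\mu_\epsilon^n(R)$ up to a bounded shift of the index. Both defining conditions have the form ``there exists $\ell$ such that $|f^{n+\ell}(z)|$ dominates a specified sequence tending to $\infty$,'' so a shift comparison between the two sequences translates directly into a shift of $\ell$, which is harmless thanks to the existential quantifier.

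The easy half in each case is immediate from the monotonicity of $M(\cdot,f)$. If $\epsilon \leq 1$ then $\mu_\epsilon(r) = M(\epsilon r,f) \leq M(r,f)$, which iterates to $\mu_\epsilon^n(R) \leq M^n(R,f)$, so any $z \in A(f)$ lies in $\tilde{A}(f)$ with the same $\ell$. If $\epsilon \geq 1$, the symmetric argument gives $\tilde{A}(f) \subseteq A(f)$.

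For the remaining direction I would establish, for $R$ sufficiently large, the shift inequality $\mu_\epsilon^{n+1}(R) \geq M^n(R,f)$ when $\epsilon < 1$, and $\mu_\epsilon^n(R) \leq M^{n+1}(R,f)$ when $\epsilon > 1$. Either inequality immediately gives the remaining containment after replacing $\ell$ by $\ell+1$. I would prove each by induction, strengthening the naive form to $\mu_\epsilon^n(R) \geq M^{n-1}(R,f)/\epsilon^2$ (respectively $\mu_\epsilon^n(R) \leq M^{n+1}(R,f)/\epsilon^2$) in order to generate enough slack to close the iteration. The base case is straightforward for $R$ large; the inductive step rewrites $\mu_\epsilon^{n+1}(R) = M(\epsilon\mu_\epsilon^n(R),f)$, substitutes the hypothesis, and requires the comparison $M(s/\epsilon,f)/M(s,f) \geq 1/\epsilon^2$ (when $\epsilon < 1$) or $\leq 1/\epsilon^2$ (when $\epsilon > 1$) for $s$ large.

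This last point is the main obstacle, but it follows from the standard {\tef} property $M(r,f)/r \to \infty$ combined with the convexity of $\log M(r,f)$ as a function of $\log r$ (Hadamard's three-circles theorem): these together imply that the slope $d\log M/d\log r$ tends to $\infty$, and hence over any fixed multiplicative interval the increment $\log M(s/\epsilon,f) - \log M(s,f)$ tends to $\pm\infty$ according as $\epsilon < 1$ or $\epsilon > 1$. In particular the required quantitative comparison with the constant $1/\epsilon^2$ holds eventually. Once the two shift inequalities are in hand, the equality $A(f) = \tilde{A}(f)$ follows.
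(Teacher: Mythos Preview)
The paper does not prove this lemma; it simply quotes it as \cite[Theorem~2.7]{Rippon01102012}. So there is no in-paper proof to compare against, and what you have written is a self-contained argument for the cited result.

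Your argument is correct. The shift comparison between $(\mu_\epsilon^n(R))$ and $(M^n(R,f))$ is exactly the right idea, and the strengthened induction hypothesis $\mu_\epsilon^n(R)\ge M^{n-1}(R,f)/\epsilon^2$ (and its mirror for $\epsilon>1$) closes neatly once you have $M(ks,f)/M(s,f)\to\infty$ for fixed $k>1$. That growth fact is indeed a consequence of Hadamard convexity together with $\log M(r,f)/\log r\to\infty$, and the paper itself records it explicitly in Section~\ref{S2}. One small point worth making explicit: your induction requires $R$ large enough that the comparison $M(s/\epsilon,f)/M(s,f)\ge 1/\epsilon^2$ holds for all $s\ge R$, which is in principle a stronger condition than the single hypothesis $\mu_\epsilon(r)>r$ in the lemma. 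This is harmless, because both $A(f)$ and the right-hand set are independent of the admissible choice of $R$ (for $A(f)$ this is standard from \cite{Rippon01102012}; for the right-hand set the same monotonicity-and-shift argument gives it), so you may enlarge $R$ before running the induction.
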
 
\begin{proof}[Proof of Theorem~\ref{T2}]
We may suppose that $f \in \mathcal{E}_n$, for some $n \geq 3$. We show that there exists a set $\mathcal{K} \subset J(f) \cap A(f)$ such that $\mathcal{K}$ has positive area. \\

%
%
We first define some variables required to start the construction. Let $\epsilon_0$ be the constant from (\ref{there.is.epsilon}) and define $$\alpha(r) = \frac{1}{2} e^{\epsilon_0 r}, \qfor r>0.$$ 

Choose $\varname_0$ sufficiently large that the following all hold;
\begin{itemize}
\item $\varname_0 \geq \varname'$, where $\varname'$ is the constant from Lemma~\ref{ineq.lemma};
\item $\alpha(\varname_0) > \varname_0$;
\item $\frac{1}{2}\sigma e^{\varname_0} \min\{|a_k|:0\leq k\leq n-1\}$ is large compared to both $\sigma$ and $\tau$, where $\sigma$ is the constant defined in (\ref{sigmadef}) and $\tau$ is the constant defined in (\ref{etaeq});
\item $\mu_{\epsilon_0}(r) > r,$ for $r \geq \varname_0.$
\end{itemize}

Define 
\begin{equation}
\label{thek}
\varname_{k} = \alpha^k(\varname_{0}), \qfor k\isnatural.
\end{equation}

%
%
We now construct the set $\mathcal{K}$. We first pack the complex plane with disjoint squares of side $\sigma$ -- which we refer to as \itshape boxes \normalfont --  by defining $$B_{m,m'} = \{ z : m\sigma < \operatorname{Re}(z) < (m+1)\sigma, m'\sigma < \operatorname{Im}(z) < (m'+1)\sigma\}, \qfor m, m' \in \mathbb{Z}.$$

Recall that, for $\varname>0$, $R(\varname)$ is defined in (\ref{Rdef}) and $P(\varname)$ is defined following (\ref{etaeq}); see Figure~\ref{casen5}. Choose $m_0, m'_0 \in \mathbb{Z}$ such that $B_{m_0, m'_0}\subset R(\varname_0)$, and set $K_0 = B_{m_0, m'_0}$. We define inductively a sequence of collections of disjoint subsets of $K_0$ as follows;
\begin{itemize}
\item $\mathcal{K}_0 = \{K_0\}$,
\item $\mathcal{K}_n$ consists of the connected sets $K_n$ satisfying the following conditions:
    \begin{enumerate}[(i)]
        \item there exist $m, m' \in \mathbb{Z}$ such that $f^n(K_n) = B_{m, m'}$ and $B_{m, m'} \subset R(\varname_n)$.
        \item $K_n \subset K_{n-1}$ for some $K_{n-1} \in \mathcal{K}_{n-1}$.
    \end{enumerate} 
\end{itemize}
We complete the construction by setting $$\widetilde{\mathcal{K}}_n = \bigcup_{K \in \mathcal{K}_n} K, \qfor n\in\{0,1,\cdots\}, \quad\text{ and }\quad \mathcal{K} = \bigcap_{n=0}^\infty \widetilde{\mathcal{K}}_n.$$

We next show that these collections are non-empty. First, we need to understand the size of the image of a box. Suppose that $B$ is a box such that $B \subset R_0(\varname)$, where $\varname \geq \varname_0$. If $z \in B$, then, by (\ref{eqDp1}) and (\ref{eqDp2}), we have $$|\arg (f(z)) - \arg (a_0 e^z)| \leq 1/\eta,$$ for some branch of the argument defined in a neighbourhood of $f(z)$, and $$|a_0 e^z|\left(1 - 1/\eta\right) \leq |f(z)| \leq |a_0 e^z|\left(1 + 1/\eta\right).$$ Hence, by symmetry, if $B$ is a box such that $B \subset R(\varname)$, where $\varname \geq \varname_0$, then $f(B)$ contains a curvilinear square of side at least $\frac{1}{2}\sigma e^{\varname} \min\{|a_k|:0\leq k\leq n-1\}$.

Suppose then that $n\isnatural$, and that $K_{n-1}\in\mathcal{K}_{n-1}$. We note two facts about the set $f^{n}(K_{n-1})$; see Figure~\ref{fig.x}. Firstly, since $f^{n-1}(K_{n-1})$ is a box contained in $R(\varname_{n-1})$, it follows from the discussion above that $f^{n}(K_{n-1})$ contains a curvilinear square, of side at least $\frac{1}{2}\sigma e^{\varname_{n-1}} \min\{|a_k|:0\leq k\leq n-1\}$. This value, by the choice of $\varname_0$, is large compared to both $\tau$ and $\sigma$. Hence the set $f^{n}(K_{n-1})$ is large compared to the size of the boxes, and also large compared to the strips $Q_k$, $k~\in~\{0,1,\cdots,n-~1\}$, which are of fixed width $2\tau$. Secondly, we note that, by (\ref{there.is.epsilon}) and (\ref{thek}), and since $|w| < 2\varname$, for $w\in P(\varname)$, we have that
\begin{equation*}
f(z) \in \mathbb{C} \ \backslash\ P(\varname_{k+1}), \qfor z\in R(\varname_k), \ k\isnatural.
\end{equation*} 
We deduce that $f^{n}(K_{n-1}) \subset \mathbb{C}\backslash P(\varname_n)$. It follows from these two facts, by induction, that $\mathcal{K}_{n}$ is non-empty for $n\isnatural$. \\

We next claim that 
\begin{equation}
\label{areaeq}
\operatorname{area}(f^{n}(K_{n-1}\backslash\widetilde{\mathcal{K}}_{n})) = O(e^{\varname_{n-1}}).
\end{equation}
The set $f^{n}(K_{n-1})$ is the image of a box, and $f^{n}(K_{n-1}\backslash\widetilde{\mathcal{K}}_{n})$ consists of all the points in this set which do not lie in a box which itself is contained in $f^{n}(K_{n-1})$. It follows that $f^{n}(K_{n-1}\backslash\widetilde{\mathcal{K}}_{n})$ consists of the union of three sets. The first is the set of points of $f^{n}(K_{n-1})$ which lie on the boundary of a box; this set has area zero. The second is the set of points of $f^{n}(K_{n-1})$ (if any) which lie in a box which intersects with one of the strips $Q_k$; the area of this set is easy to estimate as $O(e^{\varname_{n-1}})$. The third is contained in the set of points of $f^{n}(K_{n-1})$ which lie at a distance less than or equal to $\sigma\sqrt{2}$ from the boundary of $f^{n}(K_{n-1})$; it follows from (\ref{eqDp2}) and (\ref{fdasheq}) that the length of the boundary of $f^{n}(K_{n-1})$ is also $O(e^{\varname_{n-1}})$. This completes the proof of (\ref{areaeq}). \\

\begin{figure}[ht]
	\centering
	\includegraphics[width=12cm,height=9cm]{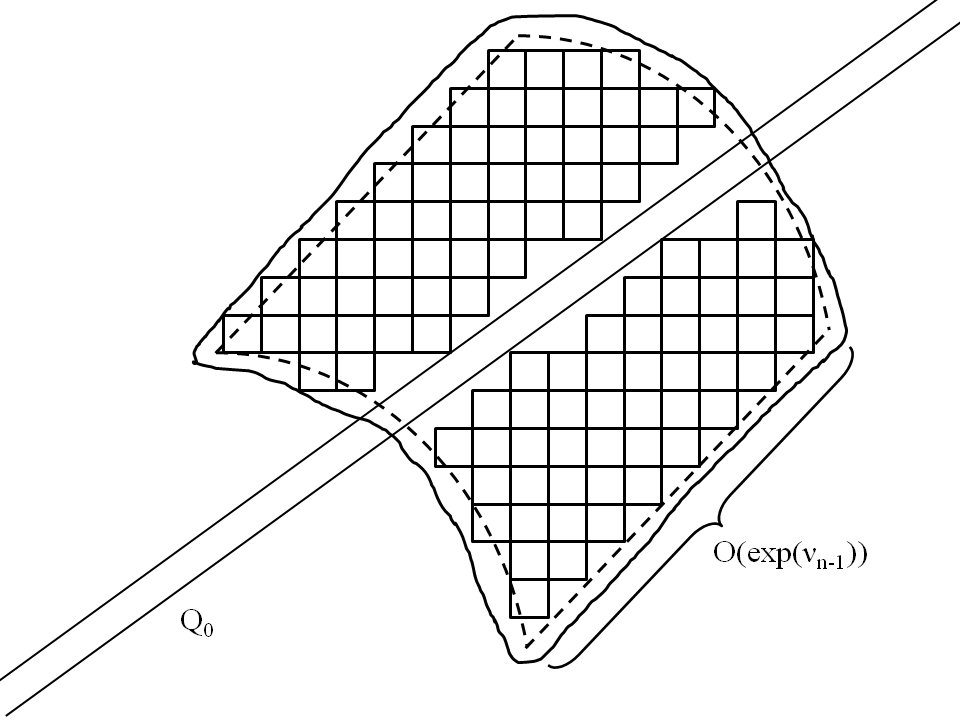}
	\caption{$f^{n}(K_{n-1})$, where $K_{n-1} \in \mathcal{K}_{n-1}$, shown with solid boundary and packed with boxes which belong to $f^{n}(\mathcal{K}_{n})$. Note that $f^{n}(K_{n-1})$ contains a slightly smaller curvilinear square -- shown with a dashed boundary -- which has side $O(\exp(\varname_{n-1}))$.}
  \label{fig.x}
\end{figure}  

Next we show that $\mathcal{K}$ has positive area. Choose $n\isnatural,$ and $K_{n-1} \in \mathcal{K}_{n-1}$. It follows from (\ref{sigmadef}), (\ref{uniform.expansion}), (\ref{bounded.nonlinearity}) and Lemma~\ref{uvlemma} that we can apply Lemma~\ref{distortion.lemma}, with $U = R(\varname_0)$, $s = \sigma$ and $\alpha = M = 2$. We deduce that the distortion of $f^{n}$ on $K_{n-1}$ is bounded independently of $n$ and $K_{n-1}$. Hence, by (\ref{distortioneq}), we have that $$\frac{\operatorname{area}(K_{n-1}\cap\widetilde{\mathcal{K}}_{n})}{\operatorname{area}(K_{n-1})} = 1- \frac{\operatorname{area}(K_{n-1}\backslash\widetilde{\mathcal{K}}_{n})}{\operatorname{area}(K_{n-1})}  \geq 1 - O\left(\frac{\operatorname{area}(f^{n}(K_{n-1}\backslash\widetilde{\mathcal{K}}_{n}))}{\operatorname{area}(f^{n}(K_{n-1}))}\right).$$

Now, by (\ref{areaeq}) $$\frac{\operatorname{area}(f^{n}(K_{n-1}\backslash\widetilde{\mathcal{K}}_{n}))}{\operatorname{area}(f^{n}(K_{n-1}))} = O(e^{-\varname_{n-1}}).$$

Making $\varname_0$ larger, if necessary, we can assume that $\sum_{n=1}^\infty e^{-\varname_{n-1}}$ is arbitrarily small. It follows that there exists $\Delta > 0$ such that $$\frac{\operatorname{area}(\mathcal{K})}{\operatorname{area} K_0} \geq \prod_{n=1}^\infty \left(1 - O\left(e^{-\varname_{n-1}}\right)\right) \geq \Delta,$$ and so $\mathcal{K}$ has positive area, as required.\\

Finally we show that $\mathcal{K} \subset J(f) \cap A(f)$. Suppose that $z \in \mathcal{K}$. It follows from (\ref{there.is.epsilon}), and by construction, that $|f^n(z)| \geq \mu_{\epsilon_0}(\varname_0)$, for $n\isnatural$. Hence, by Lemma~\ref{otherA}, and by choice of $\varname_0$, we have that $z \in A(f)$. 

It follows from (\ref{there.is.lambda}) that we can apply Theorem~\ref{TinJulia}, with $z_0 = z$, to obtain that either $z$ is in a multiply connected Fatou component of $f$, or $z \in J(f)$. However, by Corollary~\ref{Cnomconn}, $f$ has no multiply connected Fatou components, and so $z \in J(f)$. This completes the proof of Theorem~\ref{T2}.
\end{proof}
%
%
%
%
%
%
%
\section{Proof of Theorem~\ref{T1}}
\label{S2}
%
%
For the proof of Theorem~\ref{T1} we require the following \cite[Theorem 8.1]{Rippon01102012}.
\begin{theorem}
\label{webtheorem}
Let $f$ be a {\tef} and let $R > 0$ be such that $M(r,f) > r$, for $r \geq R$. Then $A_R(f)$ is a {\spw} if and only if there exists a sequence $(G_n)_{n \geq 0}$ of bounded simply connected domains such that, for all $n \geq 0$,
\begin{equation} \label{webtheorema} G_n \supset  B(0, M^n(R, f)) \end{equation}
and
\begin{equation}  \label{webtheoremb} G_{n+1} \text { is contained in a bounded component of } \mathbb{C}\ \backslash\ f(\partial G_n). \end{equation}
\end{theorem}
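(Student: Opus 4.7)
My plan is to prove the two implications separately.

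For $(\Leftarrow)$: given $(G_n)$ satisfying (\ref{webtheorema}) and (\ref{webtheoremb}), the aim is to show $\partial G_n \subset A_R(f)$ for every $n$, and then extract a nested exhausting subfamily witnessing $A_R(f)$ as a {\spw}. The one-step bound is immediate: for $z \in \partial G_n$, the image $f(z) \in f(\partial G_n)$ lies outside the bounded component $V_n$ of $\mathbb{C}\setminus f(\partial G_n)$ containing $G_{n+1}$, and since $V_n \supset B(0,M^{n+1}(R,f))$ we get $|f(z)| \geq M^{n+1}(R,f) \geq M(R,f)$. To propagate this to all iterates --- the essential difficulty, because $f(z)$ need not lie on any $\partial G_m$, so the hypothesis cannot simply be reapplied --- I would pass to an inductively defined refinement $(\widehat G_n)$: set $\widehat G_0 = G_0$ and let $\widehat G_{n+1}$ be the bounded component of $\mathbb{C}\setminus f(\partial \widehat G_n)$ containing $G_{n+1}$. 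By construction each $\widehat G_n$ is simply connected (being a bounded component of the complement of a continuum, its complement in $\widehat{\mathbb{C}}$ is connected), nested in a suitable sense, and contains $B(0,M^n(R,f))$. The key gain is that $\partial \widehat G_{n+1} \subset f(\partial \widehat G_n)$ by definition of component, so forward orbits of boundary points track along successive boundaries: for $z \in \partial \widehat G_n$, one shows inductively $f^k(z) \in f(\partial \widehat G_{n+k-1})$, which is disjoint from the component $\widehat G_{n+k}$, and combined with $\widehat G_{n+k} \supset B(0,M^{n+k}(R,f))$ this yields $|f^k(z)| \geq M^{n+k}(R,f) \geq M^k(R,f)$. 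Hence $\partial \widehat G_n \subset A_R(f)$, the $(\widehat G_n)$ exhibit the {\spw} exhaustion since $M^n(R,f)\to\infty$, and $A_R(f)$ is connected because $\bigcup_n \partial \widehat G_n$ is a nested union of connected sets.

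For $(\Rightarrow)$: given witnessing domains $(D_m)$ for $A_R(f)$ as a {\spw}, I would set $G_n = D_{m_n}$ for a sufficiently sparse subsequence, chosen so that $D_{m_n} \supset B(0,M^n(R,f))$ (possible because $\bigcup_m D_m = \mathbb{C}$), thereby securing (\ref{webtheorema}). For (\ref{webtheoremb}), one exploits that $\partial D_m \subset A_R(f)$ controls the iterates of boundary points, and by choosing $m_n$ large enough one forces $|f(z)| \geq M^{n+1}(R,f)$ on $\partial D_{m_n}$, ensuring $f(\partial G_n)$ is disjoint from $B(0,M^{n+1}(R,f))$. A winding-number argument, using that $\partial D_{m_n}$ is a connected curve separating $0$ from $\infty$, then identifies a bounded component of $\mathbb{C}\setminus f(\partial G_n)$ containing $B(0,M^{n+1}(R,f)) \subset G_{n+1}$.

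The principal obstacle is the iterative propagation step in $(\Leftarrow)$: the naive induction fails because the image $f(z)$ of a point $z \in \partial G_n$ need not be a boundary point of any subsequent $G_m$. The refinement $(\widehat G_n)$ is designed so that the containment $\partial \widehat G_{n+1} \subset f(\partial \widehat G_n)$ holds by construction, making the inductive step formally close; however, careful verification is required that each forward iterate $f^k(z)$ genuinely remains in the relevant $f(\partial \widehat G_{n+k-1})$ rather than escaping into an extraneous bounded complementary component, and that $\widehat G_{n+1}$ indeed contains $G_{n+1}$ at each stage (which follows inductively from $\widehat G_n \supseteq G_n$ together with (\ref{webtheoremb}) applied to $G_n$, since enlarging the boundary on which $f$ is evaluated only refines the complementary-component structure around $G_{n+1}$).
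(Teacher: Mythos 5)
You should first be aware that this paper contains no proof of Theorem~\ref{webtheorem}: it is quoted verbatim from Rippon and Stallard (Theorem 8.1 of \emph{Fast escaping points of entire functions}), so your argument has to be judged against what a correct proof requires rather than against anything in this paper. Judged that way, your $(\Leftarrow)$ direction has a fatal gap. The inclusion $\partial \widehat G_{n+1}\subset f(\partial \widehat G_n)$ is a statement about \emph{preimages}: every point of $\partial \widehat G_{n+1}$ has some preimage on $\partial \widehat G_n$, but it says nothing about where a given point of $\partial \widehat G_n$ is sent. Your induction already fails at $k=2$: from $f(z)\in f(\partial \widehat G_n)$ you cannot conclude $f(z)\in\partial \widehat G_{n+1}$, so nothing constrains $f^2(z)$, and in general $\partial \widehat G_n\not\subset A_R(f)$ --- a point of $f(\partial G_0)$ is merely some point of modulus at least $M(R,f)$ whose further orbit the hypotheses do not control. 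Thus the whole strategy of pushing the domains forward cannot produce fast escaping points; one must pull back instead: for each fixed $n$ build, using (\ref{webtheoremb}) at every stage, nested continua whose $j$-th images avoid $G_{n+j}$ for $0\le j\le k$, and pass to the limit to obtain a continuum in $A_R(f)$ (since $\mathbb{C}\setminus G_{n+j}\subset\{w:|w|\ge M^{n+j}(R,f)\}$) separating $B(0,M^n(R,f))$ from infinity. Moreover your construction is not even well defined: hypothesis (\ref{webtheoremb}) concerns $f(\partial G_n)$, and $\partial\widehat G_n$ is not an enlargement of $\partial G_n$ but a different set altogether (the boundary of a larger domain), so there is no reason why $G_{n+1}$ should avoid $f(\partial \widehat G_n)$, let alone lie in a bounded component of its complement. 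Finally, the boundaries $\partial\widehat G_n$ are not ``nested connected sets'', their union need not be connected, and even a connected subset would not show that $A_R(f)$ itself is connected; connectedness of $A_R(f)$ needs its own argument.

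Your $(\Rightarrow)$ direction also rests on an unjustified step: $\partial D_m\subset A_R(f)$ gives only $|f(z)|\ge M(R,f)$ for $z\in\partial D_m$, and membership of $A_R(f)$ does not improve the level of escape of the first iterate as $|z|$ or $m$ grows, so ``choosing $m_n$ large enough'' does not force $|f(z)|\ge M^{n+1}(R,f)$ on $\partial G_n$; without that, $f(\partial G_n)$ may enter $B(0,M^{n+1}(R,f))\subset G_{n+1}$ and (\ref{webtheoremb}) fails. A workable route is to push forward a single loop: if $\partial H\subset A_R(f)$ and $H\supset B(0,R)$, then every point of $f^n(\partial H)$ has the form $f^n(\zeta)$ with $\zeta\in A_R(f)$, so \emph{all} of its further iterates are controlled, and one takes for $G_n$ a suitable complementary component of $f^n(\partial H)$ containing the origin, using the maximum modulus principle and the fact that $\partial f^n(H)\subset f^n(\partial H)$ to see that $B(0,M^n(R,f))$ lies in a bounded component; even here some topological bookkeeping (filling components while keeping them off $f(\partial G_n)$) is needed, and for the complete argument you should consult the proof of Theorem 8.1 in Rippon and Stallard's paper.
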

\begin{proof}[Proof of Theorem~\ref{T1}]
Recall again that $f(z) = \sum_{k=0}^{n-1} a_k \exp(\omega_n^k z)$, where $n\geq 3$, and $\omega_n~=~\exp(2\pi i/n)$. We first prove that $A_R(f)$ is a {\spw} by giving an explicit construction of the sequence of domains $(G_n)_{n\isnatural}$ given in the statement of Theorem~\ref{webtheorem}. The proof of the rest of the theorem then follows quickly.

Recall Figure~\ref{casen5} for an illustration of the various sets defined earlier. For large values of $\varname$, the domain $P(\varname)$, defined in Section~\ref{S1a}, is almost a candidate for a domain $G_n$, for some $n\isnatural$. However, $|f(z)|$ can be small for values of $z$ close to the vertices of $P(\varname)$. We need to modify $P(\varname)$ to a slightly smaller domain, $P'(\varname)$, in such a way that if $z$ is on the boundary of $P'(\varname)$, then $|f(z)|$ is, in some sense, large. Roughly speaking, we define the boundary of $P'(\varname)$ by `cutting off' the vertices of $P(\varname)$. 

We now explain the first part of this construction, which is illustrated by the dotted line in Figure~\ref{casen5}. Set $c_0 = \log (a_{n-1}/a_0)$, where $\log$ is any branch of the logarithm, and define a {\tef}
\begin{equation}
\label{eqg}
g_0(z) = a_0 \exp(z) + a_{n-1} \exp(\omega_n^{n-1} z) = a_0 \exp(z) (1 + \exp((\omega_n^{n-1} - 1)z + c_0)).
\end{equation}

Define also the lines 
\begin{equation}
\label{eqLm}
L_{0,m} = \{z : \operatorname{Im} ((\omega_n^{n-1} - 1)z + c_0) = 2m\pi\}, \qfor m\in\mathbb{Z}.
\end{equation}
The reason for this choice of line is as follows. If $z \in Q_0$ is of large modulus, then $f(z)$ is very close to $g_0(z)$. On these lines the two exponentials which make up $g_0(z)$ have the same argument. 

It can readily be shown that $$L_{0,m} = \left\{ z : \operatorname{Im}(z) = -\cot (\pi/n) \operatorname{Re}(z) + \frac{\operatorname{Im}(c_0) - 2m\pi}{2\sin^2 (\pi/n)}\right\}, \qfor m\isnatural.$$ It follows that these lines are perpendicular to the boundary of $Q_0$. Note that, by (\ref{eqg}) and (\ref{eqLm}),
\begin{equation}
\label{geq}
|g_0(z)| \geq |a_0 \exp(z)|, \qfor z \in L_{0,m}, \ m\in\mathbb{Z}.
\end{equation}

For large values of $\varname$, there are many values of $m\in\mathbb{Z}$ such that $L_{0,m}$ intersects both the line segments $\Gamma_0(\varname)$ and $\Gamma_{n-1}(\varname)$. Let $p_0\in\mathbb{Z}$ be the least of the values such that both $\Gamma_0(\varname) \cap L_{0,p_0} \not\subset Q_0$ and $\Gamma_{n-1}(\varname) \cap L_{0,p_0} \not\subset Q_0$. Let $\Gamma_0(\varname) \cap L_{0,p_0} = \{ z_0\}$ and let $\Gamma_{n-1}(\varname) \cap L_{0,p_0} = \{ z_0'\}$. (We remark that choosing a small value of $p_0$ ensures that $z_0$ has large imaginary part).
 
The first part of the boundary of $P'(\varname)$ is made up of the union of three line segments. The first is the segment of $\Gamma_0(\varname)$ from the midpoint of $\Gamma_0(\varname)$ to $z_0$. The second is the segment of $L_{0,p_0}$ from $z_0$ to $z_0'$. The third is the segment of $\Gamma_{n-1}(\varname)$ from $z_0'$ to the midpoint of $\Gamma_{n-1}(\varname)$. 

This is the first part of the construction of the boundary of $P'(\varname)$. The remainder of the boundary of $P'(\varname)$ is completed by repeating this process $n$ times, once again using arguments from symmetry. \\

We now consider the value of the modulus of $f$ on the boundary of $P'(\varname)$, where we assume that $\varname$ is sufficiently large for the comments after (\ref{geq}) to hold. We also assume that $\varname \geq \varname'$, where $\varname'$ is defined in Lemma~\ref{ineq.lemma}, and that $\varname$ is sufficiently large that 
\begin{equation}
\label{modzbig}
\varname \leq |z| \leq 2\varname, \qfor z \in \partial P'(\varname).
\end{equation}

Suppose first that $z\in \partial P'(\varname) \cap \Gamma_0(\varname)$. Then, by (\ref{eqDp1}) and (\ref{eqDp2}), we have that 
\begin{equation}
\label{eqA}
|f(z)| \geq \frac{1}{2}|a_0| e^\varname, \qfor z\in \partial P'(\varname) \cap \Gamma_0(\varname).
\end{equation}

Suppose next that $z\in\partial P'(\varname) \cap L_{0,p_0}$. Here we have that $|a_0 \exp(z)|$ may be comparable to $|a_{n-1} \exp(\omega_n^{n-1} z)|$, but all other terms in the summand of $f$ are of negligible modulus. Hence, by (\ref{geq}), there exists $\kappa = \kappa(\tau, n) > 0$ such that 
\begin{equation}
\label{eqB}
|f(z)| \geq \frac{1}{2} |g_0(z)| \geq \frac{1}{2} |a_0| e^{\operatorname{Re}(z)} \geq \frac{1}{2}|a_0| e^{-\kappa} e^\varname, \qfor z\in\partial P'(\varname) \cap L_{0,p_0}.
\end{equation}
(In fact, it follows from an elementary geometric argument, that we may take $\kappa = 2\pi \cot (\pi/n) + 2 \tau \sin (\pi/n)$.)

We deduce from (\ref{eqA}) and (\ref{eqB}), and from considerations of symmetry, that there exists a constant $\epsilon' > 0$, such that, for all sufficiently large values of $\varname$, 
\begin{equation}
\label{eqC}
|f(z)| > \epsilon' \ e^\varname, \qfor z \in \partial P'(\varname). 
\end{equation}

Define a real-valued function 
\begin{equation}
\label{eqbeta}
\beta(r) = \frac{\epsilon'}{2} e^{r}, \qfor r > 0.
\end{equation}
It is straightforward to see that there exists $0 < \delta < 1$ such that $\beta(r) \geq \delta M(r,f)$, for $r > 0$. Choose $R>0$ sufficiently large that $M(r,f) > r$, for $r \geq R$. 

It is well-known that 
\begin{equation*}
\frac{\log M(r,f)}{\log r} \rightarrow\infty \text{ as } r\rightarrow\infty,
\end{equation*}
and also (see, for example, \cite{Rippon01102012}) that if $k > 1$, then 
\begin{equation*}
\frac{M(kr,f)}{M(r,f)} \rightarrow\infty \text{ as } r\rightarrow\infty.
\end{equation*}
It follows that we may assume that $R$ is sufficiently large that $$\delta M(r,f) \geq \frac{1}{\delta} M(\delta r,f) \geq r, \qfor r \geq R.$$

We choose a value of $\varname$ sufficiently large for previous estimates to hold, and such that $\varname \geq R/\delta$. We deduce that
\begin{equation}
\label{beta.is.nice.eq}
\beta^n(\varname) \geq M^n(R, f), \qfor n\isnatural.
\end{equation} 

We now define the domains $(G_n)_{n \geq 0}$ in the statement of Theorem~\ref{webtheorem}. Let $$G_n = P'(\beta^{n}(\varname)), \qfor n\geq 0.$$ Equation (\ref{webtheorema}) holds by (\ref{modzbig}) and (\ref{beta.is.nice.eq}). Equation (\ref{webtheoremb}) holds by (\ref{modzbig}), (\ref{eqC}) and (\ref{eqbeta}). It follows, by Theorem~\ref{webtheorem}, that $A_R(f)$ is a {\spw}.

The rest of the proof of Theorem~\ref{T1} is now quite straightforward. Since $A_R(f)$ is a {\spw}, $A(f)$ and $I(f)$ are also {\spws} by \cite[Theorem 1.4]{Rippon01102012}. Since, by Corollary~\ref{Cnomconn}, $f$ has no multiply connected Fatou components, the fact that each of $$A_R(f) \cap J(f), \ A(f) \cap J(f), \ I(f) \cap J(f), \text{ and } J(f)$$ is also a {\spw} follows by \cite[Theorem 1.5(a)]{Rippon01102012}.
\end{proof}
%
%
%
\section{On logarithmic tracts}
\label{Stracts}
In this brief section we give some definitions and one preliminary result, in order to prove that if $f \in \mathcal{E}_n$, for $n\geq 3$, then $f$ does not have a logarithmic tract. 

We use definitions taken from \cite{MR2439666}. Suppose that $f$ is a {\tef}. Suppose also that $U$ is an unbounded domain with unbounded complement, the boundary of which consists of piecewise smooth curves. We say that $U$ is a \itshape direct tract \normalfont of $f$ if there exists $R > 0$ such that $|f(z)| = R$, for $z \in \partial U$, and also $|f(z)| > R$, for $z \in U$. If, in addition, the restriction $f : U \to \{z : |z| > R\}$ is a universal covering, then we say that $U$ a \itshape logarithmic tract \normalfont of $f$. In particular, if $U$ is a logarithmic tract, then $U$ is simply connected.

Although Lemma~\ref{LnotinB} could be proved directly, it seems most straightforward to use the following result of Rippon and Stallard, which is part of \cite[Theorem 1.8]{Rippon01102012}.
\begin{lemma}
\label{RS1}
Let $f$ be a {\tef}, let $R > 0$ be such that $M(r,f) > r$, for $r \geq R$, and let $A_R(f)$ be a {\spw}. Then there is no path to infinity on which
$f$ is bounded, and $f \notin \mathcal{B}$.
\end{lemma}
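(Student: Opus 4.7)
The plan is to prove the two conclusions separately, arguing both by contradiction and drawing on Theorem~\ref{webtheorem}.

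For the first conclusion, I would suppose for contradiction that $\gamma : [0, \infty) \to \mathbb{C}$ is a path with $|\gamma(t)| \to \infty$ and $|f(\gamma(t))| \leq K$ for some $K > 0$. Invoking Theorem~\ref{webtheorem}, I obtain bounded simply connected domains $G_n$ satisfying $G_n \supset B(0, M^n(R, f))$ and with $G_{n+1}$ contained in a bounded component of $\mathbb{C} \setminus f(\partial G_n)$. The crucial step is to observe that this latter condition, combined with $G_{n+1} \supset B(0, M^{n+1}(R, f))$, forces $f(\partial G_n) \cap B(0, M^{n+1}(R, f)) = \emptyset$. Since each $G_n$ is bounded while $\gamma(t) \to \infty$, the path must cross $\partial G_n$, yielding a point $z_n = \gamma(t_n) \in \partial G_n$ with $|f(z_n)| \geq M^{n+1}(R, f)$. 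Since $M^{n+1}(R,f) \to \infty$, this contradicts $|f(z_n)| \leq K$.

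For the second conclusion, $f \notin \mathcal{B}$, I would deduce it directly from the first. If $f \in \mathcal{B}$, then by the Eremenko--Lyubich logarithmic change of variables, I can choose $R' > 0$ larger than every singular value so that each component of $\{z : |f(z)| > R'\}$ is an unbounded simply connected logarithmic tract $T$ over infinity. Each such tract has a boundary component that is an unbounded arc on which $|f| = R'$, parametrising a path to infinity along which $f$ is bounded. This directly contradicts the first conclusion, so $f \notin \mathcal{B}$.

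The main obstacle is the quantitative step in the first part: the inequality $|f(z_n)| \geq M^{n+1}(R, f)$ must be extracted carefully from the surrounding property in Theorem~\ref{webtheorem}, as this is where the dynamical force of the {\spw} hypothesis is felt. Once this is in place, the second conclusion follows easily via the standard Eremenko--Lyubich tract structure.
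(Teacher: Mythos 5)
Note first that the paper itself gives no proof of this lemma: it is quoted directly as part of \cite[Theorem 1.8]{Rippon01102012}, so there is no in-paper argument to compare against, and your proposal should be judged as a self-contained proof. As such it is correct, and it is essentially the standard argument. In the first part, the key deduction is right: (\ref{webtheoremb}) gives $G_{n+1}\cap f(\partial G_n)=\emptyset$, so by (\ref{webtheorema}) applied to $G_{n+1}$ we get $|f(z)|\geq M^{n+1}(R,f)$ for $z\in\partial G_n$. The only point you should make explicit is why the path meets $\partial G_n$: a path that never enters $G_n$ need not cross its boundary, but since $M^n(R,f)\to\infty$ we have $\gamma(0)\in B(0,M^n(R,f))\subset G_n$ for all sufficiently large $n$, and then boundedness of $G_n$ together with $|\gamma(t)|\to\infty$ forces a crossing; large $n$ suffices for the contradiction. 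In the second part, the reduction of $f\notin\mathcal{B}$ to the first conclusion is sound, but it leans on a nontrivial imported fact: for $R'$ exceeding all singular values, each component of $\{z:|f(z)|>R'\}$ is an unbounded, simply connected tract whose boundary contains an unbounded curve on which $|f|=R'$ (simple connectivity rules out bounded boundary components, and the choice of $R'$ above the critical values makes the level set a union of analytic arcs). This is standard Eremenko--Lyubich theory and should be cited, e.g.\ \cite{MR1196102} or \cite{MR2439666}, rather than asserted.
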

\begin{lemma}
\label{LnotinB}
Suppose that $f \in \mathcal{E}_n$, for $n\geq 3$. Then $f$ does not have a logarithmic tract. In particular $f \notin \mathcal{B}$. 
\end{lemma}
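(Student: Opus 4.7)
The approach is to derive this directly from Theorem~\ref{T1} and Lemma~\ref{RS1}, which have already been set up for exactly this purpose. Since $f \in \mathcal{E}_n$ with $n \geq 3$, Theorem~\ref{T1} tells us that $A_R(f)$ is a spider's web for any $R>0$ with $M(r,f) > r$ when $r \geq R$. Plugging this into Lemma~\ref{RS1} gives, for free, the ``in particular'' statement $f \notin \mathcal{B}$, and moreover the stronger conclusion that there is no path to infinity on which $f$ is bounded. This second fact is the key tool for eliminating logarithmic tracts.

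To handle the main statement, I would argue by contradiction. Suppose $U$ is a logarithmic tract of $f$, so in particular $U$ is a direct tract: there exists $R' > 0$ with $|f(z)| = R'$ for $z \in \partial U$, and $U$ is simply connected, unbounded, with unbounded complement, and with piecewise smooth boundary. The goal is then to exhibit a path to infinity on which $|f|$ equals $R'$, contradicting Lemma~\ref{RS1}.

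The one step that needs a little care is producing this path. Viewing $U$ as a simply connected domain in the Riemann sphere $\hat{\mathbb{C}}$ containing neither an isolated point of the complement nor a bounded complement, the boundary $\partial U \cup \{\infty\}$ is a connected subset of $\hat{\mathbb{C}}$ containing $\infty$. Together with the hypothesis that $\partial U$ consists of piecewise smooth curves, this forces $\partial U$ to contain at least one unbounded arc, and in particular a simple piecewise smooth curve $\gamma : [0, \infty) \to \partial U$ with $|\gamma(t)| \to \infty$ as $t \to \infty$. Along this path $|f(\gamma(t))| = R'$, so $f$ is bounded on a path to infinity, contradicting Lemma~\ref{RS1}.

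The main obstacle, as anticipated, is the topological claim that $\partial U$ contains an unbounded arc; everything else is assembling results already in the paper. Once this is in place, the ``in particular $f \notin \mathcal{B}$'' conclusion either comes for free from Lemma~\ref{RS1}, or alternatively from the standard fact that every $f \in \mathcal{B}$ admits a logarithmic tract, which is now ruled out.
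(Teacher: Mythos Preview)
Your proposal is correct and follows essentially the same route as the paper. Both arguments use Theorem~\ref{T1} to get that $A_R(f)$ is a spider's web and then invoke Lemma~\ref{RS1}; the only difference is framing. The paper argues directly: for any direct tract $U$, Lemma~\ref{RS1} forces $\partial U$ to have only bounded components, whence $U$ cannot be simply connected and so is not a logarithmic tract. You argue the contrapositive: assume $U$ is a logarithmic tract, hence simply connected, and produce an unbounded boundary arc on which $|f|=R'$, contradicting Lemma~\ref{RS1}. The topological content---that a simply connected unbounded domain with unbounded complement and piecewise smooth boundary must have an unbounded boundary component---is the same in both, and you actually justify it more carefully than the paper does (the paper's ``Hence $U$ cannot be simply connected'' is terse). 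Your justification via connectedness of $\partial_{\hat{\mathbb{C}}}U$ together with local connectedness of a piecewise smooth $1$-manifold is sound.
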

\begin{proof}
Choose $R > 0$ such that $M(r,f) > r$, for $r \geq R$. By Theorem~\ref{T1} we have that $A_R(f)$ is a {\spw}. Suppose that $U$ is a direct tract of $f$. Then, by Lemma~\ref{RS1}, the boundary of $U$ has only bounded components. Hence $U$ cannot be simply connected, and so $U$ is not a logarithmic tract.
\end{proof}

%
%
%
Acknowledgment: \normalfont
The author is grateful to Phil Rippon and Gwyneth Stallard for all their help with this paper.
%
%
\bibliographystyle{acm}

\end{document}